\documentclass[10pt]{article}
\usepackage{latexsym}
\usepackage{amssymb}
\usepackage{amsmath}
\usepackage{amsfonts}

\newtheorem{theorem}{Theorem}[section]

\newtheorem{proposition}[theorem]{Proposition}
\newtheorem{example}[theorem]{Example}

\newcommand{\eproof}{\makebox[1cm]{}\hfill{\framebox[2.5mm]{}}}

\usepackage[top=3cm, left=4cm, right=4cm, bottom=3cm, ignoreall]{geometry}

\newcommand{\mtr}[1]{(#1)}
\newcommand{\dtr}[1]{\langle #1\rangle}
\newcommand{\str}[1]{\{#1\}}

\begin{document}
\title{Flexible Latin directed triple systems}
\author
{Ale\v s Dr\'apal\thanks{Ale\v s Dr\'apal supported by VF 20102015006.},
Andrew R. Kozlik\thanks{Andrew Kozlik supported by SVV-2014-260107.}\\
Department of Algebra\\
Charles University\\
Sokolovsk\'a 83\\
186 75 Praha 8, CZECH REPUBLIC\\[2mm]
Terry S. Griggs\\
Department of Mathematics and Statistics\\
The Open University\\
Walton Hall\\
Milton Keynes MK7 6AA, UNITED KINGDOM}
\date{}
\maketitle

\vspace{-7mm}
\begin{abstract}
It is well known that, given a Steiner triple system, a quasigroup
can be formed by defining an operation $\cdot$ by the identities $
x \cdot x = x$ and $x \cdot y = z$ where $z$ is the third point in
the block containing the pair $\{x,y\}$. The same is true for a
Mendelsohn triple system where the pair $(x,y)$ is considered to
be ordered. But it is not true in general for directed triple
systems. However directed triple systems which form quasigroups
under this operation do exist and we call these Latin directed
triple systems. The quasigroups associated with Steiner and
Mendelsohn triple systems satisfy the flexible law $x \cdot (y
\cdot x) = (x \cdot y) \cdot x$ but those associated with Latin
directed triple systems need not. In a previous paper, [Discrete
Mathematics 312 (2012), 597--607], we studied non-flexible Latin
directed triple systems. In this paper we turn our attention to
flexible Latin directed triple systems.
\end{abstract}

\noindent\textbf{Keywords:} Directed triple system, quasigroup.\\
\noindent\textbf{AMS classification:} 05B07, 20N05. \\

\newpage
\section{Introduction}
This paper is a sequel to~\cite{ldts}. There, we introduced the
concepts of a Latin directed triple system and a DTS-quasigroup,
developed some of the basic theory and determined the existence
spectrum of Latin directed triple systems whose associated
quasigroups do not satisfy the flexible law. Here we turn our
attention to flexible Latin directed triple systems.

First we recall the basic definitions and results which are
appropriate for our purposes. A \emph{Steiner triple system} of
order~$n$, STS($n$), is a pair $(V,\mathcal{B})$ where $V$ is a
set of $n$ points and $\mathcal{B}$ is a collection of triples of
distinct points, also called blocks, taken from $V$ such that
every pair of distinct points from $V$ appears in precisely one
block. Such systems exist if and only if $n\equiv 1\text{ or }3
\pmod{6}$~\cite{K}. A \emph{Steiner quasigroup} or \emph{squag} or
\emph{idempotent totally symmetric quasigroup} is a pair
$(Q,\cdot)$ where $Q$ is a set and $\cdot$ is an operation on $Q$
satisfying the identities
\[
  x \cdot x = x, \quad y \cdot (x \cdot y) = x, \quad x \cdot y = y \cdot x.
\]
If $(V,\mathcal{B})$ is an STS($n$), then a Steiner quasigroup
$(Q,\cdot)$ is obtained by letting $Q = V$ and defining $x \cdot y
= z$ where $\{x,y,z\} \in \mathcal{B}$. The process is reversible;
if $Q$ is a Steiner quasigroup, then a Steiner triple system is
obtained by letting $V = Q$ and $\{x,y,z\} \in \mathcal{B}$ where
$x \cdot y = z$ for all $x, y \in Q$, $x \neq y$. Thus there is a
one-one correspondence between all Steiner triple systems and all
Steiner quasigroups~\cite[Theorem V.1.11]{P}. This is all
well-known.

Next consider ordered triples. There are two possibilities. A \emph{cyclically ordered triple},
denoted by $\mtr{x,y,z}$, contains the ordered pairs $(x,y)$, $(y,z)$, $(z,x)$ and
a \emph{transitively ordered triple}, denoted by $\dtr{x,y,z}$ contains the ordered
pairs $(x,y)$, $(y,z)$, $(x,z)$.

A \emph{Mendelsohn triple system} of
order~$n$, MTS($n$), is a pair $(V,\mathcal{B})$ where $V$ is a set of $n$ points and
$\mathcal{B}$ is a collection of cyclically ordered triples of distinct points taken
from $V$ such that every ordered pair of distinct points from $V$ appears in
precisely one triple. Such systems exist if and only if $n\equiv 0$ or 1 (mod 3), $n
\neq 6$~\cite{Me}. Quasigroups can be obtained from Mendelsohn triple
systems by precisely the same procedures as described above for Steiner triple
systems. Note that the law $y \cdot (x \cdot y) = x$ is usually called
semi-symmetric. So the quasigroups are known as \emph{idempotent semisymmetric
quasigroups}~\cite[Remark 2.12]{BL} or \emph{Mendelsohn quasigroups};
they satisfy the same properties as their Steiner counterparts with the exception of
commutativity. Similarly there is a one-one correspondence between Mendelsohn triple
systems and Mendelsohn quasigroups.

A \emph{directed triple system} of order $n$, DTS($n$), is a pair
$(V,\mathcal{B})$ where $V$ is a set of $n$ points and
$\mathcal{B}$ is a collection of transitively ordered triples of
distinct points taken from $V$ such that every ordered pair of
distinct points from $V$ appears in precisely one triple. Such
systems exist if and only if $n\equiv 0\text{ or }1
\pmod{3}$~\cite{HM}. Given a DTS($n$), an algebraic structure
$(V,\cdot)$ can be obtained as above by defining $x \cdot x = x$
and $x \cdot y = z$ for all $x, y \in V$, $x \neq y$ where $z$ is
the third element in the transitive triple containing the ordered
pair $(x,y)$. However the structure obtained is not necessarily a
quasigroup. If $\langle u,x,y \rangle$ and $\langle y,v,x \rangle
\in \mathcal{B}$ then $u \cdot x = v \cdot x = y$. But some
DTS($n$)s do yield quasigroups. Such a DTS($n$) will be called a
\emph{Latin directed triple system}, and denoted by LDTS($n$), to
reflect the fact that in this case the operation table forms a
Latin square. We call the quasigroup so obtained a
\emph{DTS-quasigroup}. In~\cite{ldts} an easy necessary and
sufficient condition for a directed triple system to be Latin was
proved.
\begin{theorem}\label{LDTScondx}
Let $D=(V,\mathcal{B})$ be a DTS($n$). Then $D$ is an LDTS($n$) if and only if
$\dtr{x,y,z} \in \mathcal{B} \Rightarrow \dtr{w,y,x} \in \mathcal{B}$ for some $w \in V$.
\end{theorem}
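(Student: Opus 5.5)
The plan is to reduce the Latin property to a local condition on the two blocks covering a given unordered pair, and then compare that local condition with the one in the statement. First I record which products a block determines. A block $\dtr{x,y,z}$ contains exactly the ordered pairs $(x,y)$, $(y,z)$, $(x,z)$, so it gives precisely
\[
x\cdot y=z,\qquad y\cdot z=x,\qquad x\cdot z=y .
\]
Since $V$ is finite and $b\cdot b=b$, while $u\cdot b\ne b$ and $b\cdot u\ne b$ for $u\ne b$, the structure is a quasigroup iff for all distinct $b,c$ each of the equations $u\cdot b=c$ and $b\cdot u=c$ has exactly one solution $u$. For distinct $b,c$ let $T_1$ be the unique block containing the ordered pair $(c,b)$ and $T_2$ the unique block containing $(b,c)$.

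Next I count solutions of $u\cdot b=c$ from the three product rules. We have $u\cdot b=c$ iff some block has one of the forms $\dtr{u,b,c}$, $\dtr{c,u,b}$ or $\dtr{u,c,b}$.
\begin{itemize}
\item The last two forms contain $(c,b)$ with $b$ in last position, so such a block is $T_1$ and occurs iff $b$ is last in $T_1$. The only other option is $T_1=\dtr{c,b,w}$.
\item The first form contains $(b,c)$, so it is $T_2$ and occurs iff $T_2=\dtr{u,b,c}$.
\end{itemize}
Hence the number of solutions equals
\[
[\,T_1\ne\dtr{c,b,\ast}\,]+[\,T_2=\dtr{\ast,b,c}\,].
\]
This equals $1$ iff ($T_1=\dtr{c,b,w}$ for some $w$ $\Leftrightarrow$ $T_2=\dtr{u,b,c}$ for some $u$).

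Doing the same for $b\cdot u=c$: the solutions come from $\dtr{b,u,c}$, $\dtr{b,c,u}$ (both equal to $T_2$, occurring iff $b$ is first in $T_2$) and from $\dtr{c,b,u}$ (equal to $T_1$). The only block containing $(b,c)$ without $b$ first is $\dtr{\ast,b,c}$, so we get exactly the same condition. Thus $D$ is an LDTS($n$) iff for every ordered pair $(c,b)$ of distinct points,
\[
(\ast)\qquad \dtr{c,b,w}\in\mathcal{B}\ \text{for some } w \iff \dtr{u,b,c}\in\mathcal{B}\ \text{for some } u .
\]

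Finally I compare $(\ast)$ with the statement. Putting $c=x$, $b=y$, the forward implication of $(\ast)$ is exactly the condition of Theorem~\ref{LDTScondx}, so the ``only if'' direction is immediate. For ``if'' I only need the reverse implication of $(\ast)$, and I expect this to be the one point needing care. Let
\[
A=\{(c,b): \dtr{c,b,\ast}\in\mathcal{B}\},\qquad B=\{(c,b): \dtr{\ast,b,c}\in\mathcal{B}\}.
\]
Each block contributes exactly one element to $A$ (its first two entries) and exactly one to $B$ (its last two entries, reversed), so $|A|=|B|=|\mathcal{B}|$. The hypothesis says $A\subseteq B$, and since both sets are finite of the same size, $A=B$. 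That is exactly $(\ast)$ in both directions, which completes the proof.
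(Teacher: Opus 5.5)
This theorem is not proved in the present paper; it is quoted from \cite{ldts}, so there is no in-paper argument to compare yours with. Judged on its own, your proof is correct and complete.

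Your reduction is sound. For distinct $b,c$ you show that:
\begin{itemize}
\item the number of solutions of $u\cdot b=c$ is $[T_1\neq\dtr{c,b,\ast}]+[T_2=\dtr{\ast,b,c}]$;
\item the number of solutions of $b\cdot u=c$ is $[T_2\neq\dtr{\ast,b,c}]+[T_1=\dtr{c,b,\ast}]$.
\end{itemize}
Both counts equal $1$ exactly when $(\ast)$ holds. You leave one point implicit: when both indicators are $1$, the two candidate solutions really are distinct. This holds because a solution $u$ determines the block through $(u,b)$, and $T_1\neq T_2$.

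The counting $|A|=|B|=|\mathcal{B}|$ then correctly upgrades the one-sided hypothesis $A\subseteq B$ to $A=B$. A pleasant by-product of your route is that the row and column conditions coincide, so for a magma coming from a DTS, unique solvability on one side already forces it on the other.

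Your global count can also be localized. Fix $y$ and send each block $\dtr{x,y,z}$ to the block $\dtr{w,y,x}$ through $(y,x)$. This is an injective self-map of the finite set of blocks with middle point $y$, hence a permutation. That is exactly why the chains $\dtr{z_1,x,z_0},\dtr{z_2,x,z_1},\dots$ used in Section~2 close up into cycles.
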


Before proceeding further, it is important to point out two
fundamental differences between DTS-quasigroups and Steiner or
Mendelsohn quasigroups which motivates the study of these
structures. First, DTS-quasigroups are \emph{not} in one-one
correspondence with Latin directed triple systems. Non-isomorphic
LDTS($n$)s can yield identical DTS-quasigroups. In view of this,
for purposes of enumeration, it makes more sense to count
non-isomorphic DTS-quasigroups rather than non-isomorphic
LDTS($n$)s. Secondly all Steiner and Mendelsohn quasigroups
satisfy the flexible law $x\cdot(y\cdot x) = (x\cdot y)\cdot x$.
DTS-quasigroups need not. In~\cite{ldts}, the two following
results were proved.
\begin{theorem}\label{enum}
The number of non-isomorphic DTS-quasigroups of order $n = 3$, $4$, $6$, $7$, $9$, $10$, $12$
are $1$, $0$, $0$, $2$, $4$, $0$, $2$ respectively.
\end{theorem}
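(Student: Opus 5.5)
Theorem~\ref{enum} is an enumeration result, so I would prove it by an exhaustive but structured computer search, with a hand argument only for the smallest orders. Throughout I would use Theorem~\ref{LDTScondx} as the local test. It says that a DTS is Latin exactly when, for every block $\dtr{x,y,z}$, the ordered pair $(y,x)$ also occurs as the \emph{middle-then-first} pair $\dtr{w,y,x}$ of some block. Equivalently, the reversal map on the underlying pairs sends the pair $(x,y)$ of a block in position (first, middle) to a pair in position (middle, last). This lets the search prune partial systems as soon as such a pair is placed wrongly.

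\textbf{Small orders by hand.} For $n=3$ I would check directly. The blocks must be $\dtr{a,b,c}$ and $\dtr{c,b,a}$, the reversed pairs being forced. This system satisfies Theorem~\ref{LDTScondx}, and all choices are isomorphic, which gives $1$. For $n=4$ there are $12/3=4$ blocks. Counting positions, each point is the middle point of some number of blocks, and the Latin condition forces a bijection between blocks via $(x,y)\mapsto(y,x)$ from (first, middle) pairs to (middle, last) pairs. I expect a short parity or degree argument, or a quick case check, to show that no LDTS(4) exists. For $n=6$ and $n=10$ I would first look for a general counting obstruction. Each point $x$ lies in $(n-1)$ blocks with prescribed in/out-degree constraints, and the condition of Theorem~\ref{LDTScondx} pairs blocks up. I suspect a congruence such as $n\not\equiv 4 \pmod 6$ emerges, or perhaps $n \equiv 0 \pmod 3$ together with some restriction. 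If no clean argument appears, these cases fall to the exhaustive search.

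\textbf{Search for larger orders.} For $n=7,9,10,12$ I would enumerate DTS-quasigroups rather than systems, since non-isomorphic LDTS can give the same quasigroup. The search proceeds as follows. First, fill a Latin square $L$ that is idempotent and such that each $x\cdot y=z$ with $x\ne y$ is consistent with a transitive triple containing $(x,y)$. Concretely, the triple $\{x,y,z\}$ must carry a transitive orientation in which $(x,y)$ is one of its three pairs, and the products it induces on its other two pairs must agree with $L$. Second, branch on the unassigned ordered pair with the fewest legal completions, applying the Latin condition as a constraint at every step. Third, apply isomorph rejection by canonical augmentation under $S_n$ acting on quasigroup tables; for example, fix the first row by symmetry up to relabelling. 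Finally, for each complete table, compute a canonical form, for instance with a graph-isomorphism tool on the associated coloured graph, and count the classes.

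\textbf{Main obstacle.} I expect the main difficulty to be the orders $10$ and $12$. The search space there is large, and a claimed count of $0$ or $2$ must be trustworthy. I would therefore run two independent implementations: one enumerating DTS via blocks, and one enumerating quasigroup tables. I would also cross-check them on orders $7$ and $9$, where the expected counts $2$ and $4$ can be compared with known cyclic examples. For example, from a DTS on $\mathbb{Z}_7$ with suitable base blocks, one can verify the Latin property via Theorem~\ref{LDTScondx}.
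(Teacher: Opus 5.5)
This paper does not prove the theorem; it quotes it from \cite{ldts}, where the counts come from computer enumeration. So your overall route (a machine search over quasigroups with isomorph rejection and canonical forms, cross-checked against a block-based search) is the right one. One step, however, fails as you state it: your per-pair test on tables does not characterize DTS-quasigroups. Take a $3$-set $\{x,y,z\}$ with $x\cdot y=z$, $y\cdot z=x$, $x\cdot z=y$, $z\cdot y=x$, but $y\cdot x\ne z$ and $z\cdot x\ne y$; this is compatible with the Latin property. Then $(x,y)$, $(y,z)$, $(x,z)$ pass your test via $\dtr{x,y,z}$, and $(z,y)$ passes via $\dtr{x,z,y}$. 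Yet four arcs cannot be partitioned into transitive triples, so no DTS yields this table. Adding $z\cdot x=y$ gives a five-arc configuration that fails in the same way. The correct criterion rests on the fact that an arc $(u,v)$ satisfies $u\cdot v\in\{x,y,z\}\setminus\{u,v\}$ exactly when its block lies on $\{x,y,z\}$. Hence an idempotent Latin square is a DTS-quasigroup if and only if, for every $3$-subset, the set of such arcs is empty, a single transitive triple, or all six arcs. In the table formulation Theorem~\ref{LDTScondx} is automatic, since the table is Latin by construction; what must be enforced is the DTS property. Without this condition, or a search directly over blocks, your counts are not reliable, in particular the $0$ at $n=10$.

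For the small orders, your guessed obstruction $n\not\equiv 4\pmod 6$ cannot hold, since this paper exhibits flexible LDTS($16$), LDTS($22$) and LDTS($28$). The real hand argument is a parity count.
\begin{itemize}
\item Let $m_x$ and $f_x$ be the numbers of blocks having $x$ as middle point and as first point. Counting the arcs leaving $x$ gives $2f_x+m_x=n-1$.
\item By Theorem~\ref{LDTScondx}, the map $\dtr{a,x,b}\mapsto\dtr{w,x,a}$ permutes the blocks with middle $x$. Its $2$-cycles are exactly the bidirectional pairs, and it has no fixed points, so every other cycle has length at least $3$.
\item Hence for even $n$, $m_x$ is odd, so $x$ is the middle of at least three unidirectional blocks.
\item Summing over $x$ gives $n(n-1)/3=\sum_x m_x\ge 3n$, i.e.\ $n\ge 10$.
\end{itemize}
This disposes of $n=4$ and $n=6$ by hand. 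At $n=10$ equality holds, which forces a pure system with no Steiner triples in which every point is the middle of exactly one $3$-cycle. That drastically cuts down the order-$10$ search.
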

\begin{theorem}\label{nonflexth}
The existence spectrum of non-flexible LDTS($n$)s is $n \equiv 0, 1 \pmod{3}$,
$n \neq 3$, $4$, $6$, $7$, $10$.
\end{theorem}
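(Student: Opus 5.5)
This is the spectrum result for non-flexible LDTS($n$) quoted from~\cite{ldts}. The plan has two halves: necessity, where the excluded orders are ruled out, and sufficiency, which is a construction for every other admissible $n$.

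\emph{Necessity.} The congruence $n\equiv 0,1 \pmod 3$ holds for any DTS($n$) by~\cite{HM}. The orders $4$, $6$, $10$ admit no DTS-quasigroup at all by Theorem~\ref{enum}, so they admit no LDTS. For $n=3$ and $n=7$, I will use the condition of Theorem~\ref{LDTScondx} to enumerate the LDTS up to isomorphism. Every block $\dtr{x,y,z}$ forces a block $\dtr{w,y,x}$, and this gives enough rigidity to list the systems by hand. For $n=3$ the only system is $\dtr{0,1,2},\dtr{2,1,0}$, and it is flexible. For $n=7$ I will check that each of the two DTS-quasigroups of Theorem~\ref{enum} satisfies $x\cdot(y\cdot x)=(x\cdot y)\cdot x$. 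I expect them to be the quasigroups arising from the STS($7$) and from an MTS($7$), each with the triples suitably reoriented. Note that flexibility depends only on the quasigroup, so checking the quasigroups suffices.

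\emph{Sufficiency, small orders.} I will first give explicit non-flexible examples for the small admissible orders $9$, $12$, $13$, $15$, $16$, and a few more as needed. The natural source is cyclic systems on $\mathbb{Z}_n$, or on $\mathbb{Z}_{n-1}\cup\{\infty\}$, with base blocks chosen to satisfy Theorem~\ref{LDTScondx}. For each, a single witness pair $x,y$ with $x\cdot(y\cdot x)\ne(x\cdot y)\cdot x$ is exhibited. For $n=9$, Theorem~\ref{enum} gives four quasigroups, and at least one of them should be non-flexible.

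\emph{Sufficiency, general orders.} I will use recursive constructions that preserve both the Latin property and non-flexibility. The Latin property is local, since by Theorem~\ref{LDTScondx} it only concerns the blocks through a given pair. It is therefore preserved whenever the ingredients are Latin and the gluing is done by Latin-square-like pieces, for example the direct product of quasigroups. Concretely:
\begin{itemize}
\item[(i)] The direct product of a non-flexible DTS-quasigroup of order $m$ with a DTS-quasigroup of order $3$ is again a DTS-quasigroup. It is non-flexible because it contains a non-flexible subquasigroup. This gives $n\to 3n$.
\item[(ii)] A doubling-plus-one or tripling-plus-one construction, in the style of Bose and Skolem, or a GDD construction filling the groups with LDTS and the transversal blocks with pairs $\dtr{a,b,c},\dtr{c,b,a}$. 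This keeps the Latin condition and keeps any non-flexible subsystem. It yields $n\to 3n+1$, $n\to 2n+1$, and more generally $n = 3m+u$ from an LDTS($u$) sub-system.
\end{itemize}
Non-flexibility is then inherited whenever some ingredient is non-flexible and embeds as a subsystem, because the flexible identity restricts to subquasigroups.

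The main obstacle will be covering every residue class with these recursions using only a finite list of base cases, in particular orders like $n\equiv 1\pmod 3$ near $13$ and $16$. For these I will first try the $3m+u$ GDD-type construction with a non-flexible LDTS($u$) placed in a hole, and fall back on direct cyclic constructions for the finitely many remaining orders.
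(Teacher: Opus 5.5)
The paper gives no proof of this theorem. It is quoted from the earlier paper on Latin directed triple systems, and its only traces here are the computer enumeration behind Theorem~\ref{enum}, the recipe turning a bipyramid $O_k$ with $k\ge 6$ even into a non-flexible system (Example~\ref{nonflex9}), and the remark that the $n=58$ case uses ``the same approach as for the non-flexible case''. Your necessity half is fine. At $n=7$ you can skip the hand enumeration: Theorem~\ref{enum} gives two DTS-quasigroups of order $7$, one is the Steiner quasigroup of the Fano plane, and the paper records exactly one proper flexible one (Example~\ref{flex7}). Your guess that the second comes from an MTS($7$) is wrong, though. A Mendelsohn quasigroup is a DTS-quasigroup only if it is Steiner, and in Example~\ref{flex7} one has $1\cdot 3=6$ but $3\cdot 6=2\neq 1$, so it is not semisymmetric.

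The genuine gap is sufficiency. This is the real content of the theorem, and you leave it as a plan whose decisive step you yourself flag as open. Beyond orders $9$, $12$, $13$, which follow from the cited counts, you produce no non-flexible base system, and several of your recursions are misstated:
\begin{itemize}
\item Step (i) is valid only because the order-$3$ factor is Steiner. A direct product of two proper DTS-quasigroups is in general not a DTS-quasigroup, so the product system must be written down explicitly.
\item Doubling $n\to 2n+1$ with Steiner triples needs a one-factorization of $K_{n+1}$, so it requires $n$ odd. For $n$ even you need the Hamiltonian bipyramid doubling of Section~3, which does keep the LDTS($n$) as a subsystem.
\item Nothing you describe gives $n\to 3n+1$ from an LDTS($n$). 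Three copies sharing a sub-LDTS($w$) plus a Latin square give $3n-2w$ (Proposition~\ref{44} and the $n=58$ case), and a GDD of type $n^3$ plus a point needs an LDTS($n+1$).
\end{itemize}

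Most seriously, even orders such as $16$ and $22$ are out of reach of every construction you list. They are not multiples of $3$ and not of the form $2n+1$. They are also not of the form $3m-2w$ with an LDTS($m$) having an LDTS($w$) subsystem, since that forces $m\ge 2w+1$. Finally, in a $3$-GDD on $X$ every group size is congruent to $|X|$ modulo $2$; combined with the list of admissible LDTS orders, this rules out any Steiner-block GDD whose groups, possibly with a common set of extra points, carry LDTSs. So a non-flexible LDTS($16$), an LDTS($22$) and some further small even systems must be built directly. This is exactly where the paper, in the flexible case, needs its computer-found Appendix examples, and you supply none.

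Large orders are the routine part. For example, take 3-GDDs of type $12^u m^1$ with $u\ge 3$ and $m$ chosen by residue. Put a non-flexible LDTS($12$) on the groups of size $12$, or a non-flexible LDTS($13$) if you adjoin a point $\infty$, and Steiner triples on the blocks. But until the small base systems and the residue bookkeeping are actually supplied, only necessity is proved.
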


For flexible DTS-quasigroups, again there is an easy necessary and sufficient condition.
\begin{theorem}\label{flexcondx}
A DTS-quasigroup obtained from an LDTS($n$), $D=(V,\mathcal{B})$, satisfies the flexible
law if and only if $\dtr{x,y,z}\in\mathcal{B} \Rightarrow \dtr{x,z\cdot x,y\cdot x}\in\mathcal{B}$.
\end{theorem}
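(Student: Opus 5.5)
We reduce the flexible law to a statement about a single block by tracking which products a block determines. First record the multiplication a transitive triple $\dtr{x,y,z}\in\mathcal{B}$ gives in the DTS-quasigroup: the ordered pairs $(x,y)$, $(y,z)$, $(x,z)$ lie in this block, so
\[
x\cdot y=z,\qquad y\cdot z=x,\qquad x\cdot z=y .
\]
The reversed pairs $(y,x)$, $(z,y)$, $(z,x)$ lie in other blocks, so a product like $y\cdot x$ is not determined by this block alone. This is why $z\cdot x$ and $y\cdot x$ appear in the statement. The flexible law holds trivially when $x=y$, since both sides are $x$ by idempotency. So we only need to check it for ordered pairs $(x,y)$ with $x\ne y$, and every such pair occurs in exactly one block in one of the three positions above.

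\emph{Sufficiency.} Assume the implication holds and let $\dtr{x,y,z}\in\mathcal{B}$. Then $\dtr{x,z\cdot x,y\cdot x}\in\mathcal{B}$, and the first rule applied to this block gives
\[
x\cdot(z\cdot x)=y\cdot x .
\]
Since $x\cdot y=z$, this reads $(x\cdot y)\cdot x = x\cdot(y\cdot x)$ once we rewrite $x\cdot(z\cdot x)$ appropriately. To do that, set $w=z\cdot x$. We also need $x\cdot(y\cdot x)$, so we use the third rule on the same block, which gives $x\cdot(y\cdot x)=z\cdot x$. Together with $x\cdot y=z$, this says $x\cdot(y\cdot x)=(x\cdot y)\cdot x$. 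So the flexible law holds for every pair $(x,y)$ that is the first pair $(x,y)$ of some block $\dtr{x,y,\cdot}$.

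Next, take a pair $(x,z)$ sitting as the outer pair of $\dtr{x,y,z}$. Flexibility for $(x,z)$ says $x\cdot(z\cdot x)=(x\cdot z)\cdot x=y\cdot x$. This is exactly the first rule on the hypothesised block $\dtr{x,z\cdot x,y\cdot x}$. So every pair whose first coordinate is the initial point of its block is covered.

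The remaining case is a pair $(y,z)$ from $\dtr{x,y,z}$, where the first coordinate is the middle point. Apply the hypothesis with $y$ playing the role of the initial point: the pair $(y,z)$ is the first pair of some block only if $y$ is initial, which it is not here. So this case has to be handled using the block of the pair $(z,y)$ or $(y,x)$, together with Theorem~\ref{LDTScondx}. I expect this case to be the main obstacle. My first attempt will be to use the Latin property to identify $z\cdot y$ and $y\cdot x$ as third points of explicitly named blocks, and then chase the hypothesis applied to those blocks.

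\emph{Necessity.} Assume the flexible law and let $\dtr{x,y,z}\in\mathcal{B}$. Flexibility at $(x,y)$ gives $x\cdot(y\cdot x)=z\cdot x$, and at $(x,z)$ gives $x\cdot(z\cdot x)=y\cdot x$. Note that $z\cdot x\ne x$, and $x\ne y$ so $y\cdot x\ne x$. Hence the pair $(x,z\cdot x)$ lies in a unique block, and its product is $y\cdot x$.

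We must show this block has $x$ as its initial point, i.e.\ that $(x,z\cdot x)$ is its first pair rather than its second or outer pair. If $(x,z\cdot x)$ were the outer pair, the block would be $\dtr{x,y\cdot x,z\cdot x}$. Using $x\cdot(y\cdot x)=z\cdot x$, the pair $(x,y\cdot x)$ would then also have product $z\cdot x$, which is consistent, so this case needs an extra argument. The natural tool is quasigroup cancellation combined with Theorem~\ref{LDTScondx} applied to the block containing $(z,x)$. I expect this exclusion, together with the middle-point case above, to be where the real work lies.
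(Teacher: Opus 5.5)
Your proposal does not prove the theorem. In each direction you defer a step as ``the main obstacle'' and never carry it out, and in the necessity direction that deferred step is the whole content of the result.

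For sufficiency, your handling of the pairs $(x,y)$ and $(x,z)$ from $\dtr{x,y,z}$ is correct. The remaining pair $(y,z)$ is not hard, and it needs no hypothesis at all. The block through $(z,y)$ cannot be $\dtr{z,t,y}$ or $\dtr{t,z,y}$: either would give $t\cdot y=z=x\cdot y$, hence $t=x$, and then $\dtr{z,x,y}$ or $\dtr{x,z,y}$ would share an ordered pair with $\dtr{x,y,z}$. So the block is $\dtr{z,y,t}$, and $y\cdot(z\cdot y)=y\cdot t=z=x\cdot y=(y\cdot z)\cdot y$.

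For necessity, write $u=y\cdot x$ and $v=z\cdot x$. The identities $x\cdot u=v$ and $x\cdot v=u$ are compatible with three possible blocks through $(x,v)$:
\begin{itemize}
\item $\dtr{x,v,u}$, the one you want;
\item $\dtr{x,u,v}$;
\item $\dtr{u,x,v}$, with $x$ in the middle, a case you overlook entirely.
\end{itemize}
So cancellation alone cannot finish the argument.

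The missing idea is to apply Theorem~\ref{LDTScondx} to $\dtr{x,y,z}$ itself, not to the block through $(z,x)$, whose shape is unknown. It gives $\dtr{w,y,x}\in\mathcal{B}$. Since this block contains the pair $(y,x)$, we get $w=y\cdot x=u$, so $\dtr{u,y,x}\in\mathcal{B}$. Now the two unwanted blocks are ruled out:
\begin{itemize}
\item $\dtr{u,x,v}$ would share the pair $(u,x)$ with $\dtr{u,y,x}$.
\item $\dtr{x,u,v}$ would give $u\cdot v=x=u\cdot y$, so $v=y$. Then the pair $(u,y)$ would lie in the two distinct blocks $\dtr{x,u,y}$ and $\dtr{u,y,x}$.
\end{itemize}
Hence the block through $(x,v)$ is $\dtr{x,v,u}=\dtr{x,z\cdot x,y\cdot x}$.

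With these two arguments added, your outline becomes a complete proof. As written, only the sufficiency direction for pairs whose first coordinate is the initial point of their block is established.
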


Note that trivially a Steiner quasigroup is a DTS-quasigroup. Such
a DTS-quasigroup will be called \emph{improper}; all others are
\emph{proper}. From~\cite{ldts}, there exist only two
non-isomorphic proper flexible DTS-quasigroups of order less
than~$13$; one of order~$7$ and one of order~$9$. They are given
in the two examples below, and in the same format as
in~\cite{ldts}, as Latin directed triple systems. For simplicity
commas are omitted from the triples. The set $\mathcal{T}$ is the
set of unordered triples or \emph{Steiner triples}. Each triple
$\str{x,y,z}$ represents a pair of transitively ordered triples in
one of three ways, (i)~$\dtr{x,y,z}$ and $\dtr{z,y,x}$; or
(ii)~$\dtr{y,z,x}$ and $\dtr{x,z,y}$; or (iii)~$\dtr{z,x,y}$ and
$\dtr{y,x,z}$. Thus these triples are \emph{bidirectional}. The
set $\mathcal{D}$ is a set of transitively ordered triples or
\emph{unidirectional} triples. Replacing a pair of bidirectional
triples in an LDTS($n$), say $\dtr{x,y,z}$ and $\dtr{z,y,x}$, with
a different pair of bidirectional triples, say $\dtr{y,z,x}$ and
$\dtr{x,z,y}$, gives a system which yields the same DTS-quasigroup
as the first and yet the two LDTS($n$)s may be non-isomorphic,
~\cite[Example 2.4]{ldts}.

\begin{example}\label{flex7}
Flexible LDTS($7$).\\
$V=\{0,1,2,3,4,5,6\}$.\\
$\mathcal{T}=\big\{\str{012}$, $\str{034}$, $\str{056}\big\}$ and\\
$\mathcal{D}=\big\{\dtr{315}$, $\dtr{514}$, $\dtr{416}$, $\dtr{613}$, $\dtr{326}$,
$\dtr{624}$, $\dtr{425}$, $\dtr{523}\big\}$.
\end{example}

\begin{example}\label{flex9}
Flexible LDTS($9$).\\
$V=\{0,1,2,3,4,5,6,7,8\}$.\\
$\mathcal{T}=\big\{\str{018}$, $\str{258}$, $\str{368}$, $\str{478}$, $\str{246}$, $\str{357}\big\}$ and\\
$\mathcal{D}=\big\{\dtr{207}$, $\dtr{706}$, $\dtr{605}$, $\dtr{504}$, $\dtr{403}$, $\dtr{302}$,
$\dtr{213}$, $\dtr{314}$, $\dtr{415}$, $\dtr{516}$, $\dtr{617}$, $\dtr{712}\big\}$.
\end{example}

\section{Structure of flexible DTS-quasigroups}
In a further paper~\cite{basics}, flexible DTS-quasigroups were
shown to have a certain structure in terms of their topology. Let
$D=(V,\mathcal{B})$ be an LDTS($n$). Denote by~$F$, the set of all
unordered triples $\{x,y,z\}$, where $\dtr{x,y,z}$ runs through
all unidirectional triples of~$D$. Now consider~$F$ as a set of
faces. Each edge $\{x,y\}$ is incident to two faces and hence we
get a generalized pseudosurface. By separating pinch points we
obtain a set of one or more components which are an invariant of
the LDTS($n$) and are very useful in determining whether two
DTS-quasigroups are isomorphic.

Consider a unidirectional triple $\dtr{z_1,x,z_0}\in\mathcal{B}$.
Then, using Theorem~\ref{LDTScondx}, there exists $k\geq 3$ and
points $z_0$, $z_1$, $z_2$, $\dots$, $z_{k-1}$ such that
\[
  \dtr{z_1,x,z_0}, \dtr{z_2,x,z_1}, \dots, \dtr{z_{k-1},x,z_{k-2}}, \dtr{z_0,x,z_{k-1}}\in\mathcal{B}.
\]
If $D$ is also flexible, using Theorem~\ref{flexcondx},
\[
  \dtr{z_1,y,z_2}, \dtr{z_2,y,z_3}, \dots, \dtr{z_{k-1},y,z_0}, \dtr{z_0,y,z_1}\in\mathcal{B}
\]
where $y = z_0\cdot z_1 = z_1\cdot z_2 = \cdots = z_{k-2}\cdot z_{k-1} = z_{k-1} \cdot z_0$.
These $2k$ transitive triples define a $k$-gonal bipyramid; denoted by~$O_k$, i.e.\ a graph
of $k+2$ vertices with a cycle of length~$k$, the points of which can be thought of as
situated around the equator of a sphere, and two middle vertex points which are connected
to all points of the cycle and which can be thought of as situated at the poles of the
sphere. Thus we have the following important result.
\begin{theorem}\label{33}
A flexible DTS-quasigroup of order~$n$ exists if and only if the
complete graph $K_n$ can be decomposed into triangles and graphs
$O_k$, $k\geq 3$. The components of the generalized pseudosurface
of the quasigroup are all spheres.
\end{theorem}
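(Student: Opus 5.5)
Both directions of the equivalence will be handled separately, followed by the topological claim.

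\emph{Necessity.} Let $D=(V,\mathcal{B})$ be a flexible LDTS($n$). Each bidirectional pair, say $\dtr{x,y,z}$ and $\dtr{z,y,x}$, covers the six ordered pairs on $\{x,y,z\}$, so it contributes one triangle of $K_n$. For the unidirectional triples we use the construction preceding the statement. Starting from $\dtr{z_1,x,z_0}$, Theorem~\ref{LDTScondx} gives the cycle of triples through the pole $x$, and Theorem~\ref{flexcondx} gives the matching triples through the pole $y$. Together these $2k$ triples cover exactly the edges of an $O_k$ with poles $x,y$ and equator $z_0,\dots,z_{k-1}$. Each edge of this $O_k$ is covered twice, once in each direction, by triples of the bipyramid. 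The equatorial edge $\{z_i,z_{i+1}\}$, for instance, lies in $\dtr{z_{i+1},x,z_i}$ and $\dtr{z_i,y,z_{i+1}}$.

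Two facts must be checked.
\begin{enumerate}
\item $k\ge 3$ and $x\ne y$. Since $y\cdot x$ lies in the triple with $x$, we get $y\ne x$ and $y\notin\{z_i\}$.
\item Every unidirectional triple lies in exactly one such bipyramid, so the bipyramids partition the unidirectional triples.
\end{enumerate}
For item~2, take any unidirectional triple $\dtr{a,b,c}$. It arises as a member of the $x$-cycle of the bipyramid generated by itself with $x=b$. We also need that, viewed from the other pole $y$, the same $2k$ triples are produced, so the structure is well defined. This amounts to showing that the rotation around $y$ given by Theorem~\ref{LDTScondx} is exactly the cycle $z_0,z_1,\dots$ reversed, which follows because each ordered pair lies in a unique triple. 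Since every ordered pair of $K_n$ lies in exactly one triple, the triangles and the $O_k$'s partition the edges of $K_n$.

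\emph{Sufficiency.} Given a decomposition of $K_n$ into triangles and bipyramids $O_k$, we define $\mathcal{B}$ directly.
\begin{itemize}
\item For each triangle $\{x,y,z\}$ include $\dtr{x,y,z}$ and $\dtr{z,y,x}$.
\item For each $O_k$ with poles $x,y$ and equator $z_0,\dots,z_{k-1}$, indices mod $k$, include $\dtr{z_{i+1},x,z_i}$ and $\dtr{z_i,y,z_{i+1}}$ for all $i$.
\end{itemize}
Each ordered pair within a component is covered exactly once. This gives a DTS($n$).

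The Latin condition of Theorem~\ref{LDTScondx} is then verified triple by triple.
\begin{itemize}
\item For $\dtr{x,y,z}$ from a triangle, $\dtr{z,y,x}$ works.
\item For $\dtr{z_{i+1},x,z_i}$, the triple $\dtr{z_{i+2},x,z_{i+1}}$ works.
\item For $\dtr{z_i,y,z_{i+1}}$, the triple $\dtr{z_{i-1},y,z_i}$ works.
\end{itemize}
Flexibility follows from Theorem~\ref{flexcondx}. For a bidirectional triple $\dtr{x,y,z}$ we have $z\cdot x=y$ and $y\cdot x=z$, which returns the same triple. For $\dtr{z_{i+1},x,z_i}$ we compute $z_i\cdot z_{i+1}=y$ and $x\cdot z_{i+1}=z_i$, and check that $\dtr{z_{i+1},y,?}$ appears. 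The $y$-type triples are handled symmetrically. The main obstacle is this index bookkeeping: the orientation of the $x$-cycle and the $y$-cycle must be chosen consistently so that Theorem~\ref{flexcondx} is satisfied. If the first orientation fails, reverse one of the two cycles.

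\emph{Spheres.} The faces of a bipyramid form a triangulated sphere: two discs, each a cone over the $k$-cycle, glued along the equator. Each edge of $K_n$ lies in exactly one bipyramid or triangle. Hence two faces from different bipyramids share at most vertices, never edges, and bidirectional triples contribute no faces at all. After separating pinch points, the components of the pseudosurface are therefore exactly the bipyramids, each a sphere.
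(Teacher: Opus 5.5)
Your necessity argument is the paper's own (the paragraph preceding the statement). Your sufficiency construction and sphere argument are sound, and the paper does not spell out either of them. However, two verifications in your write-up do not go through as stated.

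\textbf{The flexibility check for sufficiency.} This check contains a miscalculation and is left unfinished. The ordered pair $(x,z_{i+1})$ lies in $\dtr{z_{i+2},x,z_{i+1}}$, so $x\cdot z_{i+1}=z_{i+2}$, not $z_i$; it is $z_{i+1}\cdot x$ that equals $z_i$.

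With the correct value, Theorem~\ref{flexcondx} applied to $\dtr{z_{i+1},x,z_i}$ requires $\dtr{z_{i+1},y,z_{i+2}}$. Applied to $\dtr{z_i,y,z_{i+1}}$, where $z_{i+1}\cdot z_i=x$ and $y\cdot z_i=z_{i-1}$, it requires $\dtr{z_i,x,z_{i-1}}$. Both triples are in your system, so the orientations you fixed already work.

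Your fallback, ``reverse one of the two cycles,'' would actually fail. If you replace the $y$-triples by $\dtr{z_{i+1},y,z_i}$, the ordered pair $(z_{i+1},z_i)$ lies in two triples, so you no longer have a DTS. Replace the hedge with the two-line check above.

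\textbf{Item~1 of the necessity direction.} The sentence ``since $y\cdot x$ lies in the triple with $x$'' does not establish anything, and $k\ge 3$ is asserted without reason. The actual reasons are short:
\begin{itemize}
\item If $k=2$, then $\dtr{z_0,x,z_1}\in\mathcal{B}$, so the starting triple would be bidirectional.
\item $y\neq z_j$, because $\dtr{z_j,y,z_{j+1}}$ must have distinct entries.
\item If $y=x$, the triple $\dtr{z_0,y,z_1}$ becomes $\dtr{z_0,x,z_1}\in\mathcal{B}$, the same contradiction.
\end{itemize}
These facts, together with the distinctness of the $z_i$ (they form a cycle of a permutation), are what you need for the $2k$ triples to cover $6k$ distinct ordered pairs, i.e.\ to form a genuine $O_k$.
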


It is worth remarking that when at least one of the graphs $O_k$
has $k \geq 6$ and even, the decomposition of $K_n$ as described
in the theorem may also be used to obtain a non-flexible system.
Replace triples
\[\dtr{z_{2i+1},x,z_{2i}}, \dtr{z_{2i+2},x,z_{2i+1}}, \dtr{z_{2i},y,z_{2i+1}},
\dtr{z_{2i+1},y,z_{2i+2}}\] in the flexible system by triples
\[\dtr{z_{2i+1},z_{2i},x}, \dtr{x,z_{2i+2},z_{2i+1}}, \dtr{y,z_{2i},z_{2i+1}},
\dtr{z_{2i+1},z_{2i+2},y},\] $i=0,1,\ldots,(k-2)/2$, subscript
arithmetic modulo $k$. As illustration from Example~\ref{flex9},
the following is a non-flexible LDTS($9$).

\begin{example}\label{nonflex9}
Non-flexible LDTS($9$).\\
$V=\{0,1,2,3,4,5,6,7,8\}$.\\
$\mathcal{T}=\big\{\str{018}$, $\str{258}$, $\str{368}$, $\str{478}$, $\str{246}$, $\str{357}\big\}$ and\\
$\mathcal{D}=\big\{\dtr{270}$, $\dtr{076}$, $\dtr{650}$,
$\dtr{054}$, $\dtr{430}$, $\dtr{032}$, $\dtr{231}$, $\dtr{134}$,
$\dtr{451}$, $\dtr{156}$, $\dtr{671}$, $\dtr{172}\big\}$.
\end{example}

The number of equator cycles of each length is clearly an
invariant of a flexible DTS-quasigroup. Another invariant can be
calculated as follows. The \emph{type} of a vertex is the list of
valencies which it has as a middle vertex or pole of a bipyramid.
The number of vertices of each type is then also an invariant.

Thus, for the DTS-quasigroup given in Example~\ref{flex7}, there is an equator cycle of
length~$4$ $(3,5,4,6)$ and two points ($1$ and $2$) of type~$4$. For the DTS-quasigroup
given in Example~\ref{flex9} there is an equator cycle of length~$6$ $(2,7,6,5,4,3)$ and
two points ($0$ and $1$) of type~$6$. A more instructive example however of order~$13$ is
given below.
\begin{example}\label{flex13}
Flexible LDTS($13$).\\
$V=\{0,1,2,3,4,5,6,7,8,9,T,E,W\}$.\\
$\mathcal{T}$=$\big\{\str{045}$, $\str{179}$, $\str{1TW}$, $\str{358}$, $\str{37W}$, $\str{59T}\big\}$ and\\
$\mathcal{D}=\big\{\dtr{103}$, $\dtr{302}$, $\dtr{201}$, $\dtr{142}$, $\dtr{243}$,
$\dtr{341}$, $\dtr{629}$, $\dtr{92E}$, $\dtr{E2T}$, $\dtr{T26}$, $\dtr{63T}$, $\dtr{T3E}$,
$\dtr{E39}$, $\dtr{936}$, $\dtr{156}$, $\dtr{65W}$, $\dtr{W52}$, $\dtr{257}$, $\dtr{75E}$,
$\dtr{E51}$, $\dtr{18E}$, $\dtr{E87}$, $\dtr{782}$, $\dtr{28W}$, $\dtr{W86}$, $\dtr{681}$,
$\dtr{60E}$, $\dtr{E0W}$, $\dtr{W09}$, $\dtr{908}$, $\dtr{80T}$, $\dtr{T07}$, $\dtr{706}$,
$\dtr{647}$, $\dtr{74T}$, $\dtr{T48}$, $\dtr{849}$, $\dtr{94W}$, $\dtr{W4E}$, $\dtr{E46}\big\}$.
\end{example}
For this system there are equator cycles of length $3$, $4$, $6$ and $7$, two points
($2$ and $3$) of type~$4$, two points ($5$ and $8$) of type~$6$ and two points ($0$ and $4$)
of type $3,7$.

At $n=13$, the combinatorial explosion takes over and, as reported
in~\cite{basics}, there are $1\,206\,969$ non-isomorphic
DTS-quasigroups of order~$13$. Details of their automorphism
groups and genera of their separated surface components are also
given in that paper. However, only $924$ of these quasigroups are
flexible including the two Steiner quasigroups of this order.
Table~\ref{tbl:class} shows the classification in terms of numbers
of Steiner triples~($t$), lengths of equator cycles, and types of
middle valency points.

\begin{table}
\begin{center}
\footnotesize
\begin{tabular}{c@{\ }|@{\ }c@{\ }|@{\ }c@{\ }|@{\ }c@{\ }|@{\ }c@{\ }|@{\ }c@{\ }|@{\ }c@{\ }|@{\ }c@{\ }|@{\ }c@{\ }|@{\ }c@{\ }|@{\ }c@{\ }|@{\ }c@{\ }|@{\ }c@{\ }|@{\ }c@{\ }|@{\ }c@{\ }|@{\ }c@{\ }|@{\ }c@{\ }|@{\ }c@{\ }|@{\ }c@{\ }|@{\ }c@{\ }|@{\ }c}
& \multicolumn{7}{c|@{\ }}{\bf Equator Cycles} & & \multicolumn{12}{c}{\bf Middle Valency Points}\\
 \bf\# & $\bf3$ & $\bf4$ & $\bf5$ & $\bf6$ & $\bf7$ & $\bf8$ & $\bf10$ & $\boldsymbol{t}$ & $\bf4$ & $\bf6$ & $\bf3,3$ & $\bf8$ & $\bf5,3$ & $\bf4,4$ & $\bf10$ & $\bf7,3$ & $\bf6,4$ & $\bf5,5$ & $\bf4,3,3$ & $\bf4,4,4$\\
\hline
76 &    &  2 &    &  1 &    &    &    & 12 &  4 &  2 &    &    &    &    &    &    &    &    &    & \\[-1pt]
\hline
69 &    &  1 &    &    &    &    &  1 & 12 &  2 &    &    &    &    &    &  2 &    &    &    &    & \\[-1pt]
\hline
56 &    &  2 &    &    &    &    &  1 &  8 &  4 &    &    &    &    &    &  2 &    &    &    &    & \\[-1pt]
\hline
46 &    &  1 &    &  1 &    &    &    & 16 &  2 &  2 &    &    &    &    &    &    &    &    &    & \\[-1pt]
\hline
41 &    &  1 &    &  2 &    &    &    & 10 &  2 &  4 &    &    &    &    &    &    &    &    &    & \\[-1pt]
\hline
36 &    &  3 &    &    &    &    &    & 14 &  6 &    &    &    &    &    &    &    &    &    &    & \\[-1pt]
\hline
36 &    &  3 &    &    &    &    &    & 14 &  4 &    &    &    &    &  1 &    &    &    &    &    & \\[-1pt]
\hline
36 &    &  2 &    &  1 &    &    &    & 12 &  2 &    &    &    &    &    &    &    &  2 &    &    & \\[-1pt]
\hline
32 &  1 &  2 &    &    &  1 &    &    &  8 &  4 &    &    &    &    &    &    &  2 &    &    &    & \\[-1pt]
\hline
32 &    &  2 &    &  1 &    &    &    & 12 &  2 &  2 &    &    &    &  1 &    &    &    &    &    & \\[-1pt]
\hline
28 &    &  3 &    &  1 &    &    &    &  8 &  6 &  2 &    &    &    &    &    &    &    &    &    & \\[-1pt]
\hline
28 &    &  3 &    &  1 &    &    &    &  8 &  4 &  2 &    &    &    &  1 &    &    &    &    &    & \\[-1pt]
\hline
28 &    &  3 &    &  1 &    &    &    &  8 &  4 &    &    &    &    &    &    &    &  2 &    &    & \\[-1pt]
\hline
28 &    &  2 &    &    &    &    &    & 18 &  4 &    &    &    &    &    &    &    &    &    &    & \\[-1pt]
\hline
28 &    &    &    &  1 &    &    &  1 & 10 &    &  2 &    &    &    &    &  2 &    &    &    &    & \\[-1pt]
\hline
24 &    &  2 &    &  2 &    &    &    &  6 &  2 &  2 &    &    &    &    &    &    &  2 &    &    & \\[-1pt]
\hline
24 &  1 &  1 &    &    &  1 &    &    & 12 &  2 &    &    &    &    &    &    &  2 &    &    &    & \\[-1pt]
\hline
22 &    &  1 &    &  2 &    &    &    & 10 &    &  2 &    &    &    &    &    &    &  2 &    &    & \\[-1pt]
\hline
18 &    &    &    &  2 &    &    &    & 14 &    &  4 &    &    &    &    &    &    &    &    &    & \\[-1pt]
\hline
16 &    &  2 &    &    &    &    &  1 &  8 &  2 &    &    &    &    &  1 &  2 &    &    &    &    & \\[-1pt]
\hline
13 &    &    &    &    &    &    &  1 & 16 &    &    &    &    &    &    &  2 &    &    &    &    & \\[-1pt]
\hline
12 &  2 &  1 &    &    &    &  1 &    &  8 &  2 &    &  2 &  2 &    &    &    &    &    &    &    & \\[-1pt]
\hline
12 &  1 &  1 &    &  1 &  1 &    &    &  6 &  2 &  2 &    &    &    &    &    &  2 &    &    &    & \\[-1pt]
\hline
12 &    &  3 &    &  1 &    &    &    &  8 &  2 &  2 &    &    &    &  2 &    &    &    &    &    & \\[-1pt]
\hline
12 &  1 &    &    &  1 &  1 &    &    & 10 &    &  2 &    &    &    &    &    &  2 &    &    &    & \\[-1pt]
\hline
12 &    &  1 &    &  1 &    &    &  1 &  6 &  2 &  2 &    &    &    &    &  2 &    &    &    &    & \\[-1pt]
\hline
10 &  1 &    &    &    &  1 &  1 &    &  8 &    &    &    &  2 &    &    &    &  2 &    &    &    & \\[-1pt]
\hline
 9 &    &    &    &  3 &    &    &    &  8 &    &  6 &    &    &    &    &    &    &    &    &    & \\[-1pt]
\hline
 8 &    &  3 &    &  1 &    &    &    &  8 &  2 &    &    &    &    &  1 &    &    &  2 &    &    & \\[-1pt]
\hline
 8 &    &  2 &    &  2 &    &    &    &  6 &  2 &  4 &    &    &    &  1 &    &    &    &    &    & \\[-1pt]
\hline
 8 &  2 &    &    &    &    &    &  1 & 10 &    &    &  2 &    &    &    &  2 &    &    &    &    & \\[-1pt]
\hline
 8 &    &  2 &    &    &    &    &    & 18 &  2 &    &    &    &    &  1 &    &    &    &    &    & \\[-1pt]
\hline
 6 &  3 &    &    &    &  1 &    &    & 10 &    &    &  2 &    &    &    &    &  2 &    &    &    & \\[-1pt]
\hline
 6 &  2 &  1 &    &    &    &  1 &    &  8 &    &    &    &  2 &    &    &    &    &    &    &  2 & \\[-1pt]
\hline
 6 &    &  1 &    &  3 &    &    &    &  4 &    &  4 &    &    &    &    &    &    &  2 &    &    & \\[-1pt]
\hline
 6 &  2 &    &    &    &    &  1 &    & 12 &    &    &  2 &  2 &    &    &    &    &    &    &    & \\[-1pt]
\hline
 6 &    &  1 &  2 &    &    &    &    & 12 &  2 &    &    &    &    &    &    &    &    &  2 &    & \\[-1pt]
\hline
 6 &  1 &    &    &    &  1 &    &    & 16 &    &    &    &    &    &    &    &  2 &    &    &    & \\[-1pt]
\hline
 6 &    &  1 &    &  1 &    &    &    & 16 &    &    &    &    &    &    &    &    &  2 &    &    & \\[-1pt]
\hline
 5 &    &  1 &    &    &    &    &    & 22 &  2 &    &    &    &    &    &    &    &    &    &    & \\[-1pt]
\hline
 5 &    &    &    &  1 &    &    &    & 20 &    &  2 &    &    &    &    &    &    &    &    &    & \\[-1pt]
\hline
 4 &  2 &    &  1 &    &  1 &    &    &  8 &    &    &    &    &  2 &    &    &  2 &    &    &    & \\[-1pt]
\hline
 4 &  1 &  2 &    &    &  1 &    &    &  8 &    &    &    &    &    &  2 &    &  2 &    &    &    & \\[-1pt]
\hline
 4 &    &  3 &    &  1 &    &    &    &  8 &  3 &  2 &    &    &    &    &    &    &    &    &    & 1\\[-1pt]
\hline
 4 &    &  2 &    &  2 &    &    &    &  6 &  4 &  4 &    &    &    &    &    &    &    &    &    & \\[-1pt]
\hline
 4 &    &  3 &    &    &    &    &    & 14 &  2 &    &    &    &    &  2 &    &    &    &    &    & \\[-1pt]
\hline
 4 &    &    &  2 &    &    &    &    & 16 &    &    &    &    &    &    &    &    &    &  2 &    & \\[-1pt]
\hline
 2 &  2 &  1 &    &  2 &    &    &    &  4 &    &  2 &  2 &    &    &    &    &    &  2 &    &    & \\[-1pt]
\hline
 2 &  2 &  1 &    &  1 &    &    &    & 10 &  2 &  2 &  2 &    &    &    &    &    &    &    &    & \\[-1pt]
\hline
 2 &  2 &  1 &    &  1 &    &    &    & 10 &    &  2 &    &    &    &    &    &    &    &    &  2 & \\[-1pt]
\hline
 2 &  2 &  1 &    &  1 &    &    &    & 10 &    &    &  2 &    &    &    &    &    &  2 &    &    & \\[-1pt]
\hline
 2 &  2 &  1 &    &    &    &    &    & 16 &  2 &    &  2 &    &    &    &    &    &    &    &    & \\[-1pt]
\hline
 2 &  2 &    &    &  1 &    &    &    & 14 &    &  2 &  2 &    &    &    &    &    &    &    &    & \\[-1pt]
\hline
 2 &    &  3 &    &    &    &    &    & 14 &  3 &    &    &    &    &    &    &    &    &    &    & 1\\[-1pt]
\hline
 2 &    &    &    &    &    &    &    & 26 &    &    &    &    &    &    &    &    &    &    &    & \\[-1pt]
\hline
 1 &  2 &  1 &    &  2 &    &    &    &  4 &    &  4 &    &    &    &    &    &    &    &    &  2 & \\[-1pt]
\hline
 1 &  2 &    &    &  2 &    &    &    &  8 &    &  4 &  2 &    &    &    &    &    &    &    &    & \\[-1pt]
\hline
 1 &  2 &  1 &    &    &    &    &    & 16 &    &    &    &    &    &    &    &    &    &    &  2 & \\[-1pt]
\hline
 1 &  2 &    &    &    &    &    &    & 20 &    &    &  2 &    &    &    &    &    &    &    &    & \\[-1pt]
\end{tabular}%
\caption{Classification of flexible LDTS($13$)s.}
\label{tbl:class}
\end{center}
\end{table}

\section{Recursive constructions}
In this section we present, in the form of theorems, some
recursive constructions for flexible Latin directed triple
systems. Using Theorem~\ref{33}, we express these in terms of
decompositions of the complete graph $K_n$ into triangles and
$k$-gonal bipyramids $O_k$. We represent the latter by the
notation $[N:E_1,E_2,\ldots,E_k:S]$ where $N$ and $S$ are the
poles and $(E_1,E_2,\ldots,E_k)$ is the equator cycle. The first
two constructions are ``doubling'' and ``trebling'' constructions
respectively which often apply for combinatorial designs.
\begin{theorem}
If there exists a flexible LDTS($n$) based on a decomposition of
the complete graph $K_n$ into triangles and graphs $O_k$, $k \geq
4$ and even, then there exists a flexible LDTS($2n+1$).
\end{theorem}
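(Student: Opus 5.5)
The plan is to use Theorem~\ref{33} and work entirely with graph decompositions. Suppose $K_n$ on vertex set $V$ is decomposed into a family $\mathcal{T}$ of triangles and a family of bipyramids $O_k$ with every $k$ even. I will build a decomposition of $K_{2n+1}$ on the vertex set $V\cup V'\cup\{\infty\}$, where $V'=\{x' : x\in V\}$ is a disjoint copy of $V$, in the spirit of the classical doubling construction for Steiner triple systems.

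The edges of $K_{2n+1}$ fall into four classes:
\begin{itemize}
\item edges inside $V$;
\item edges $\infty x$, $\infty x'$ and $xx'$;
\item edges $x'y'$;
\item cross edges $xy'$ with $x\neq y$.
\end{itemize}
The first class is covered by the original decomposition. The second class is covered by the triangles $\{\infty,x,x'\}$ for $x\in V$. Every remaining edge $x'y'$, $xy'$ or $x'y$ arises from a unique edge $xy$ of $K_n$. It therefore suffices to cover, for each piece $P$ of the original decomposition, the ``derived'' set of edges $\{u'v',uv',u'v : uv\in E(P)\}$. For a triangle $\{x,y,z\}$ this is standard: use the three triangles $\{x,y',z'\}$, $\{x',y,z'\}$, $\{x',y',z\}$.

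The main step, and the only place where evenness is needed, is handling a bipyramid $[N:E_1,\dots,E_k:S]$, whose derived edge set has $9k$ edges. I would cover it with $k$ triangles and two new bipyramids $O_k$ (giving $3k+6k$ edges) as follows.
\begin{itemize}
\item Take the triangles $\{N,E_i',E_{i+1}'\}$ for $i$ odd and $\{S,E_i',E_{i+1}'\}$ for $i$ even, with subscripts modulo $k$. Since $k$ is even, each $E_j'$ lies in exactly one triangle with $N$ and one with $S$. So these triangles cover every edge $E_i'E_{i+1}'$, $NE_j'$ and $SE_j'$ exactly once.
\item What remains is the edges $N'E_i$, $N'E_i'$, $S'E_i$, $S'E_i'$, $E_iE_{i+1}'$ and $E_i'E_{i+1}$. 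Cover these by the two bipyramids
\[
[N':E_1,E_2',E_3,E_4',\dots,E_{k-1},E_k':S'] \quad\text{and}\quad [N':E_1',E_2,E_3',E_4,\dots,E_{k-1}',E_k:S'].
\]
Their equators alternate between primed and unprimed vertices; this closes up into a cycle precisely because $k$ is even.
\end{itemize}

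It remains to check edge-disjointness and completeness of these last pieces.
\begin{itemize}
\item Poles: across the two bipyramids, $N'$ and $S'$ are joined to every $E_i$ and every $E_i'$ exactly once.
\item Equators: the first equator uses $E_iE_{i+1}'$ for $i$ odd and $E_i'E_{i+1}$ for $i$ even, and the second uses the complementary pairs. Together they cover all $2k$ mixed equator edges exactly once.
\end{itemize}
Each new bipyramid is a genuine $O_k$: its $k+2$ vertices are distinct and $k\geq 4\geq 3$. Assembling everything gives a decomposition of $K_{2n+1}$ into triangles and graphs $O_k$, so Theorem~\ref{33} yields a flexible LDTS($2n+1$).

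I expect the bipyramid gadget to be the only real obstacle; the rest is bookkeeping. The gadget also shows that the new decomposition again uses only bipyramids with even $k$, so the construction can be iterated.
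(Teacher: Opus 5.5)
Your proof is correct. Its skeleton matches the paper's, and it differs in two local gadgets.

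\textbf{What is shared.} Both use the point set $V\cup V'\cup\{\infty\}$, the triangles $\{x,x',\infty\}$, and keep each original bipyramid. Both also use the two alternating bipyramids $[N':E_1,E_2',\ldots,E_k':S']$ and $[N':E_1',E_2,\ldots,E_k:S']$, which are exactly the paper's.

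\textbf{First difference: the primed equator edges.} For the edges $NE_j'$, $SE_j'$, $E_i'E_{i+1}'$ the paper just takes the primed copy $[N:E_1',\ldots,E_k':S]$ of the bipyramid. You split them into $k$ triangles alternating between $N$ and $S$. That works, but it is a second use of evenness the paper does not need.

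\textbf{Second difference: the triangles.} For a triangle $\{x,y,z\}$ the paper does not keep it and apply Pasch doubling. It replaces the triangle together with its nine derived edges by the single $4$-gonal bipyramid $[x:y,z,y',z':x']$. The two choices buy different things:
\begin{itemize}
\item Your route keeps the original decomposition intact, so the LDTS($n$) survives as a subsystem on $V$. On the triangle part it is just the classical STS doubling. However, if the input has no bipyramids (an STS($n$), which the hypothesis allows), your output is merely an STS($2n+1$), i.e.\ an improper system.
\item The paper's route turns every triangle into an $O_4$. Its output is therefore always proper, its only Steiner triples are the $n$ triples $\{x,x',\infty\}$, and it still uses only even $k$. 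The cost is that $V$ is no longer a subsystem whenever the original decomposition contains triangles.
\end{itemize}

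Since the statement only asks for some flexible LDTS($2n+1$), either version proves it.
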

\textbf{Proof.}
Let $(V,\mathcal{B})$ be a decomposition of the complete graph
$K_n$ into triangles and graphs $O_k$, $k \geq 4$ and even, as
stated in the statement of the theorem where
$V=\{0,1,\ldots,n-1\}$. Let $V'=\{\,x':x\in V\,\}$ and $W=V\cup
V'\cup\{\infty\}$. Construct a decomposition of the complete graph
$K_{2n+1}$ on the set $W$ as follows. For all
$[N:E_1,E_2,E_3,\ldots,E_{2l}: S] \in \mathcal{B}$, assign
$[N:E_1,E_2,E_3,\ldots,E_{2l}: S]$,
$[N:E'_1,E'_2,E'_3,\ldots,E'_{2l}: S]$,
$[N':E_1,E'_2,E_3,\ldots,E'_{2l}: S']$,
$[N':E'_1,E_2,E'_3,\ldots,E_{2l}: S'] \in \mathcal{B'}$. For all
$\{x,y,z\} \in \mathcal{B}$, assign $[x:y,z,y',z':x'] \in
\mathcal{B'}$. Further let $\{x,x',\infty\} \in \mathcal{B'}$ for
all $x \in V$. Then $(W,\mathcal{B'})$ is a decomposition of the
complete graph $K_{2n+1}$ into triangles and $k$-gonal bipyramids.
\eproof
\begin{theorem}
If there exists a flexible LDTS($n$) based on a decomposition of
the complete graph $K_n$ into triangles and graphs $O_k$, $k \geq
4$ and even, then there exists a flexible LDTS($3n$).
\end{theorem}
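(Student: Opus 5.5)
The plan is a standard ``inflate every point by a factor of~$3$'' construction, carried out block by block in the language of Theorem~\ref{33}. Let $(V,\mathcal{B})$ be the given decomposition of $K_n$ into triangles and bipyramids $O_k$ with $k\ge 4$ even. Put $W=V\times\mathbb{Z}_3$ and write $x_i=(x,i)$.

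The edges of $K_{3n}$ on $W$ split into two kinds. The first kind are the ``vertical'' edges $\{x_i,x_j\}$. The second kind are the edges $\{x_i,y_j\}$ with $x\ne y$ and $i,j\in\mathbb{Z}_3$ arbitrary; note that the case $i=j$ is included here. The vertical edges are covered by the triangles $\{x_0,x_1,x_2\}$, $x\in V$. For the second kind, each base edge $\{x,y\}$ lies in exactly one block of $\mathcal{B}$. So it suffices to decompose, for every block $G\in\mathcal{B}$, the lexicographic product $G[\overline{K_3}]$ into triangles and bipyramids. Here $G[\overline{K_3}]$ has all nine edges $\{x_i,y_j\}$ for each edge $\{x,y\}$ of $G$.

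\textbf{Triangles.} For a triangle $\{x,y,z\}\in\mathcal{B}$, $G[\overline{K_3}]=K_{3,3,3}$. I would use the nine triangles $\{x_i,y_j,z_{i+j}\}$, $i,j\in\mathbb{Z}_3$. This is a Latin square argument: any two of the three indices determine the third, so each of the $27$ edges is covered exactly once.

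\textbf{Bipyramids.} For a bipyramid $[N:E_1,\ldots,E_{2l}:S]\in\mathcal{B}$, I would take the nine bipyramids
\[
[N_a:E_{1,c_1},E_{2,c_2},\ldots,E_{2l,c_{2l}}:S_b],\qquad a,b\in\mathbb{Z}_3,
\]
where $E_{t,c}$ denotes $(E_t,c)$. The index choice is $c_t=a+b$ for $t$ odd and $c_t=a+2b$ for $t$ even. Because $2l$ is even, the parity of $t$ alternates consistently all the way around the equator, including the closing edge $E_{2l}E_1$. This is exactly where the hypothesis ``$k$ even'' is used.

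The verification has three parts.
\begin{itemize}
\item \emph{Equator edges.} For an equator edge $E_tE_{t+1}$, the pair $(c_t,c_{t+1})$ is either $(a+b,a+2b)$ or $(a+2b,a+b)$. Both are invertible linear maps of $(a,b)$ over $\mathbb{Z}_3$, with determinant $\pm1$. Hence all nine edges $\{E_{t,i},E_{t+1,j}\}$ are used exactly once.
\item \emph{Edges from $N$.} For fixed $a$, the map $b\mapsto c_t$ is a bijection, since the coefficient of $b$ is $1$ or $2$. So each edge $\{N_a,E_{t,c}\}$ occurs exactly once.
\item \emph{Edges from $S$.} For fixed $b$, the map $a\mapsto c_t$ is a bijection, so each edge $\{S_b,E_{t,c}\}$ occurs exactly once.
\end{itemize}
The poles $N,S$ are non-adjacent in $O_k$, so no edge $\{N_a,S_b\}$ is needed. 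All vertices within each new bipyramid are distinct, so these are genuine copies of $O_{2l}$.

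Collecting all pieces gives a decomposition of $K_{3n}$ into triangles and bipyramids (again with even equators). Theorem~\ref{33} then yields a flexible LDTS($3n$).

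The only real difficulty is choosing the indices $c_t$ so that the three covering conditions hold simultaneously. The linear-over-$\mathbb{Z}_3$ choice handles this, with the parity of $k$ ensuring the alternation closes up around the cycle.
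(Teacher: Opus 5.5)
Your proof is correct and is essentially the paper's construction: the paper also triples each point, covers the vertical edges with $\{x,x',x''\}$, and replaces each even bipyramid by nine copies whose equator indices are $a+d$ at odd positions and $a+2d$ at even positions. The paper pairs $N_a$ with $S_a$ rather than with $S_b$; either pairing works. The only real difference is on triangle blocks: instead of your nine affine-plane triangles, the paper uses three triangles plus the hexagonal bipyramid $[x':y,z,y',z',y'',z'':x'']$, so each inflated triangle together with its vertical triangles carries a proper flexible LDTS($9$). That choice is not needed for the statement, but it guarantees bipyramids in the output even when the input system is purely Steiner.
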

\textbf{Proof.}
Let $(V,\mathcal{B})$ be a decomposition of the complete graph
$K_n$ into triangles and graphs $O_k$, $k \geq 4$ and even, as
stated in the statement of the theorem where
$V=\{0,1,\ldots,n-1\}$. Let $V'=\{\,x':x\in V\,\}$,
$V''=\{\,x'':x\in V\,\}$ and $W=V \cup V' \cup V''$. Construct a
decomposition of the complete graph $K_{3n}$ on the set $W$ as
follows. For all $[N:E_1,E_2,E_3,\ldots,E_{2l}: S] \in
\mathcal{B}$, assign $[N:E_1,E_2,E_3,\ldots,E_{2l}:S]$,
$[N:E'_1,E''_2,E'_3,\ldots,E''_{2l}:S]$,
$[N:E''_1,E'_2,E''_3,\ldots,E'_{2l}:S]$,
$[N':E'_1,E'_2,E'_3,\ldots,E'_{2l}:S']$,
$[N':E_1,E''_2,E_3,\ldots,E''_{2l}:S']$,
$[N':E''_1,E_2,E''_3,\ldots,E_{2l}:S']$,
$[N'':E''_1,E''_2,E''_3,$ $\ldots,E''_{2l}:S'']$,
$[N'':E_1,E'_2,E_3,\ldots,E'_{2l}:S'']$,
$[N'':E'_1,E_2,E'_3,\ldots,E_{2l}:S''] \in \mathcal{B'}$. For all
$\{x,y,z\} \in \mathcal{B}$, assign $\{x,y,z'\}$, $\{x,y',z''\}$,
$\{x,y'',z\}$, $[x':y,z,y',z',y'',z'':x''] \in \mathcal{B'}$.
Further let $\{x,x',x''\} \in \mathcal{B'}$ for all $x \in V$.
Note that for all $\{x,y,z\} \in \mathcal{B}$, the construction
yields a flexible LDTS(9) on the point set
$\{x,x',x'',y,y',y'',z,z',z''\}$. Then $(W,\mathcal{B'})$ is a
decomposition of the complete graph $K_{3n}$ into triangles and
$k$-gonal bipyramids.
\eproof

~\\
\indent The next construction is also a ``doubling'' construction and
employs a Hamiltonian decomposition.

\begin{theorem}
If there exists a flexible LDTS($n$), then there exists a flexible
LDTS($2n+1$).
\end{theorem}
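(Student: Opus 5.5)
By Theorem~\ref{33} it suffices to decompose $K_{2n+1}$ into triangles and bipyramids $O_k$, $k\geq 3$, starting from such a decomposition $\mathcal{B}$ of $K_n$ on $V=\{0,1,\ldots,n-1\}$. Take $W=V\cup U\cup\{\infty\}$ with $|U|=n$. The edges of $K_{2n+1}$ fall into three classes: those inside $V$, those inside $U\cup\{\infty\}$, and the bipartite edges between $V$ and $U\cup\{\infty\}$. The hint is that $K_{n+1}$ on $U\cup\{\infty\}$ should be handled by a Hamiltonian decomposition, with $V$ supplying the poles of bipyramids whose equators are Hamiltonian cycles.

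First keep all triangles and bipyramids of $\mathcal{B}$ on $V$ unchanged. They cover every edge inside $V$ and no others. It remains to cover $K_{n+1}$ on $U\cup\{\infty\}$ together with the complete bipartite graph $K_{n,n+1}$ between $V$ and $U\cup\{\infty\}$.

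Since $n\equiv 0,1\pmod 3$, split on the parity of $n+1$.

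\emph{Case $n$ even.} Here $n+1$ is odd, and $K_{n+1}$ has a Hamiltonian decomposition into $n/2$ Hamiltonian cycles $C_1,\ldots,C_{n/2}$, each of length $n+1$. Pair up the points of $V$ as $\{a_i,b_i\}$, $i=1,\ldots,n/2$. Take the bipyramids $[a_i:C_i:b_i]$. Each uses all edges of $C_i$ and all edges from $a_i$ and from $b_i$ to $U\cup\{\infty\}$. Together these cover every edge inside $U\cup\{\infty\}$ and every bipartite edge exactly once, so this case is done.

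\emph{Case $n$ odd.} Here $n+1$ is even, and $K_{n+1}$ decomposes into $(n-1)/2$ Hamiltonian cycles plus a perfect matching $M$. Pair $n-1$ of the points of $V$ as poles of the Hamiltonian cycles exactly as before. One point $v\in V$ is left over, along with the matching $M$, which has $(n+1)/2$ edges. The edges from $v$ to all $n+1$ points of $U\cup\{\infty\}$ together with $M$ form exactly $(n+1)/2$ triangles $\{v,u,u'\}$ with $\{u,u'\}\in M$. This finishes the decomposition.

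Main obstacle: there is essentially none beyond the existence of Hamiltonian decompositions of $K_m$ (Walecki: into Hamiltonian cycles for $m$ odd, and into Hamiltonian cycles plus a 1-factor for $m$ even), which I would quote. I would also check that the equator lengths $n+1\geq 4$ are at least $3$, and that the resulting collection is a genuine edge partition, i.e.\ each bipyramid $O_k$ has $k$ equator edges and $2k$ pole edges, all distinct. The labels $\infty$ and $U$ are just a convenient separation; in fact the argument only uses $|U\cup\{\infty\}|=n+1$.
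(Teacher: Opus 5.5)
Your proposal is correct and is essentially the paper's proof. Like the paper, you keep the original system on $V$ and decompose $K_{n+1}$ on the new points into Hamiltonian cycles, adding a one-factor when $n$ is odd. You then take pairs of old points as the poles of $(n+1)$-gonal bipyramids over these cycles, and for $n$ odd you join the leftover old point to each one-factor edge by a Steiner triple.
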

\textbf{Proof.}
Let $(V,\mathcal{B})$ be a flexible LDTS($n$) where
$V=\{x_1,x_2,\ldots,x_n\}$ disjoint from the set
$\mathcal{Z}_{n+1}$ and $K_{n+1}$ be the complete graph on
$\mathcal{Z}_{n+1}$.

Suppose that $n$ is even. Take a decomposition of $K_{n+1}$ into
$n/2$ disjoint Hamiltonian cycles $H_i$, $1 \leq i \leq n/2$. For
each $i$, construct a $(n+1)$-gonal bipyramid
$[x_{2i-1}:H_i:x_{2i}]$ and let $\mathcal{B'}$ be the set of
unidirectional triples obtained from these bipyramids. Then $(V
\cup \mathcal{Z}_{n+1}, \mathcal{B} \cup \mathcal{B'})$ is a
flexible LDTS($2n+1$).

Now suppose that $n$ is odd. Remove a one-factor $F$ from
$K_{n+1}$ and proceed as in the even case using a decomposition of
the graph $K_{n+1} \setminus F$ into $(n-1)/2$ disjoint
Hamiltonian cycles $H_i$, $1 \leq i \leq (n-1)/2$. Further let
$\mathcal{T}$ be the set of Steiner triples $\{a,b,x_n\}$ where
the edge $\{a,b\} \in F$. Then $(V \cup \mathcal{Z}_{n+1},
\mathcal{B} \cup \mathcal{B'} \cup \mathcal{T})$ is a flexible
LDTS($2n+1$).
\eproof

~\\
\indent We remark that in the proof of the theorem we may replace the
Hamiltonian decomposition by any 2-factorization of the relevant
graph. In this respect, a particularly elegant and easy way of
implementing the construction is for each $d \in \mathcal{Z}_{n+1}
\setminus \{0\}$ and $i \in \mathcal {Z}_{n+1}$, assign $\langle
i, x_d, i+d \rangle \in \mathcal{B'}$.

A directed triple system, $(V,\mathcal{B})$, is said to be
\emph{pure} if $\langle x, y, z \rangle \in \mathcal{B}
\Rightarrow \langle z, y, x \rangle \notin \mathcal{B}$. In the
construction described in the above theorem, if $n$ is even and
the LDTS($n$) is pure then so is the LDTS($2n+1$). The
DTS-quasigroups obtained from pure Latin directed triple systems
are anti-commutative.

The final construction is in a similar vein to the previous
construction. We need some further definitions. In a Steiner
triple system, STS($n$), a \emph{parallel class} is a set of
blocks which collectively contain every point of the STS($n$)
precisely once. A \emph{Kirkman triple system} of order~$n$,
KTS($n$), is a triple $(V,\mathcal{B},\mathcal{R})$ where
$(V,\mathcal{B})$ is an STS($n$) and $\mathcal{R}$ is a
\emph{partition} or \emph{resolution} of the set of blocks
$\mathcal{B}$ into parallel classes. Such systems exist if and
only if $n \equiv$ 3 (mod 6), \cite {Lu}, \cite{RCW}.
\begin{theorem}
If there exists a flexible LDTS($2n$), then there exists a
flexible LDTS($6s+3+2n$) for all $s \geq (n-1)/3$.
\end{theorem}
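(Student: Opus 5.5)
Take a KTS($6s+3$) $(U,\mathcal{C},\mathcal{R})$ on a set $U$ disjoint from the point set $V=\{x_1,\dots,x_{2n}\}$ of the given flexible LDTS($2n$) $(V,\mathcal{B})$. Such a system exists for every $s\ge 0$ by \cite{Lu}, \cite{RCW}. Its resolution $\mathcal{R}$ has $r=3s+1$ parallel classes $P_1,\dots,P_{3s+1}$, each consisting of $2s+1$ triangles covering $U$. By Theorem~\ref{33} it suffices to decompose $K_{6s+3+2n}$ on $W=V\cup U$ into triangles and bipyramids $O_k$. The edges inside $V$ are taken care of by the decomposition underlying $(V,\mathcal{B})$. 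The edges inside $U$ and the edges between $U$ and $V$ remain, and we follow the idea of the previous theorem: pair up the points of $V$ as poles of bipyramids whose equators are $2$-factors of the graph on $U$.

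Concretely, pair the $2n$ points of $V$ as $\{x_{2i-1},x_{2i}\}$, $1\le i\le n$. For each $i$ choose two parallel classes, say $P_{2i-1}$ and $P_{2i}$. Their union is a $2$-regular spanning subgraph of $K_U$ with all degrees $4$; we want a $2$-factor. Instead, the better plan is to use one parallel class $P_i$ per pair. Any two disjoint parallel classes together form a $4$-regular graph, which by Petersen's theorem splits into two $2$-factors. Hence the union $G$ of $n$ parallel classes, and in fact of any even number of them, is a union of $2$-factors. I will use the following variant. Choose $m=n$ parallel classes if $n$ is even, handled as $n/2$ couples, each couple yielding two $2$-factors. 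For each $2$-factor $F=C_1\cup\dots\cup C_t$, which is a union of cycles of length $\ge3$, assign to a pole pair $\{x_{2i-1},x_{2i}\}$ the bipyramids $[x_{2i-1}:C_j:x_{2i}]$, $j=1,\dots,t$. Exactly as in the remark after the previous theorem (any $2$-factorization suffices), this covers every edge between $x_{2i-1},x_{2i}$ and $U$ exactly once, together with the edges of $F$. If $n$ is odd, use $n$ classes where the last single class $P$ is itself a $2$-factor made of triangles, i.e.\ bipyramids $O_3$. Either way, $n$ classes give exactly $n$ $2$-factors, one per pole pair. This requires $3s+1\ge n$, i.e.\ $s\ge (n-1)/3$, which is exactly the hypothesis. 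The remaining $3s+1-n$ parallel classes stay as Steiner triples (triangles), covering the rest of the edges of $K_U$.

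The last step is the check. The $U$–$V$ edges are each covered once, since each $u\in U$ lies on exactly one cycle of each $2$-factor. The $U$–$U$ edges are covered once, since the KTS edges are partitioned among the classes. The $V$–$V$ edges are covered by $\mathcal{B}$. Theorem~\ref{33} then yields the flexible LDTS($6s+3+2n$).

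The main obstacle is making sure each pole pair receives a genuine $2$-factor from the parallel classes. I expect the decomposition of the union of two parallel classes into two $2$-factors (Petersen) to be the step to state carefully. Alternatively, and more simply, each single parallel class already is a $2$-factor consisting of triangles. One can therefore just assign $P_i$ to the pair $\{x_{2i-1},x_{2i}\}$ and form the octahedra $[x_{2i-1}:a,b,c:x_{2i}]$ for $\{a,b,c\}\in P_i$, i.e.\ bipyramids $O_3$. This avoids Petersen entirely, and I would present this simpler version.
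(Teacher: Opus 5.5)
Your final version is exactly the paper's proof: assign the parallel class $P_i$ to the pole pair $\{x_{2i-1},x_{2i}\}$ for $1\le i\le n$, form the trigonal bipyramids $[x_{2i-1}:a,b,c:x_{2i}]$ for $\{a,b,c\}\in P_i$, and keep the remaining $3s+1-n$ classes as Steiner triples. Your edge check and your use of $3s+1\ge n$ are correct. The Petersen detour is unnecessary and misstated (the union of two parallel classes is $4$-regular, not $2$-regular), so drop it when you write this up.
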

\textbf{Proof.}
Let $(V,\mathcal{B})$ be a flexible LDTS($2n$) where
$V=\{1,2,\ldots,2n\}$ and $(W,\mathcal{S},\mathcal{R})$ be a
KTS($6s+3$) where the set $W$ is disjoint from the set $V$. The
partition $\mathcal{R}$ consists of $3s+1$ parallel classes
$\Pi_i$, $1 \leq i \leq 3s+1$. For the first $n$ parallel classes
$\Pi_i$, $1 \leq i \leq n$, construct trigonal bipyramids
$[2i-1:x,y,z:2i]$ where $\{x,y,z\} \in \Pi_i$, and then decompose
these into unidirectional triples
\[ \dtr{x,2i-1,y}, \dtr{y,2i-1,z}, \dtr{z,2i-1,x}, \dtr{y,2i,x}, \dtr{z,2i,y}, \dtr{x,2i,z}. \]
Denote this set of unidirectional triples by $\mathcal{B'}$. The
remaining parallel classes together form the set of unordered or
Steiner triples  $\mathcal{T} = \bigcup_{i=n+1}^{3s+1}\Pi_i$. Then
$(V \cup W, \mathcal{B} \cup \mathcal{B'} \cup \mathcal{T})$ is a
flexible LDTS($6s+3+2n$).
\eproof

\section{Existence of flexible Latin directed triple systems}
In this section we determine the existence spectrum of flexible
LDTS($n$). For $n$ odd, this has previously been done
in~\cite{ldts} but the proof is short, so we include it for
completeness. We will need a definition. In a Steiner triple
system, a collection of four triples on six points is called a
\emph{Pasch configuration}. It is easily seen that this structure
necessarily has the form $\str{a,b,c}$, $\str{a,y,z}$,
$\str{x,b,z}$, $\str{x,y,c}$. Given such a Pasch configuration, we
will replace it by transitive triples $\dtr{a,b,c}$,
$\dtr{a,y,z}$, $\dtr{x,b,z}$, $\dtr{x,y,c}$, $\dtr{z,y,x}$,
$\dtr{c,b,x}$, $\dtr{c,y,a}$, $\dtr{z,b,a}$. These can be thought
of as a partial LDTS($6$) but a crucial point is that they satisfy
the flexible law. We will denote this collection of eight
transitive triples by~$\mathcal{P}$. Part of the proof also uses a
standard technique, known as Wilson's fundamental construction,
for which we need the concept of a \textit{group divisible design}
(GDD). A 3-GDD of type $g^u$ is an ordered triple
($V,\mathcal{G},\mathcal{B}$) where $V$ is a base set of
cardinality $v=gu$, $\mathcal{G}$ is a partition of $V$ into $u$
subsets of cardinality $g$ called \textit{groups} and
$\mathcal{B}$ is a family of triples called \textit{blocks} which
collectively have the property that every pair of elements from
\underline{different} groups occur in precisely one block but no
pair of elements from the \underline{same} group occur at all. In
the proof for $n$ even, we will also need 3-GDDs of type $g^u
m^1$. These are defined analogously, with the base set $V$ being
of cardinality $v=gu+m$ and the partition $G$ being into $u$
subsets of cardinality $g$ and one set of cardinality $m$.
Necessary and sufficient conditions for 3-GDDs of type $g^u$ were
determined in~\cite{H} and for 3-GDDs of type $g^um^1$
in~\cite{CHR}; a convenient reference is~\cite{Ge} where the
existence of all the GDDs that are used can be verified.

\begin{proposition}\label{propflex13mod6}
There exists a proper flexible LDTS($n$) for all $n\equiv 1,3\pmod{6}$.
\end{proposition}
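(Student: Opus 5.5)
The natural route is to start from a Steiner triple system containing a Pasch configuration and replace that configuration by the collection $\mathcal{P}$ of eight transitive triples introduced just above. A Steiner triple system is itself a flexible LDTS in which every block is bidirectional, and it gives only an improper DTS-quasigroup. Replacing a single Pasch configuration $\str{a,b,c}$, $\str{a,y,z}$, $\str{x,b,z}$, $\str{x,y,c}$ by $\mathcal{P}$ produces unidirectional triples, so the resulting quasigroup is no longer a Steiner quasigroup and is therefore proper. So the plan has three steps. First, quote the known existence result: an STS($n$) containing at least one Pasch configuration exists for every $n\equiv 1,3\pmod 6$ with $n\ge 7$. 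The Fano plane is made of Pasch configurations, the STS($9$) contains Pasch configurations, and in general Pasch-containing systems exist for all admissible $n\ge 7$. Second, check that the modified system is still a DTS. Third, check that it is Latin and flexible, using Theorems~\ref{LDTScondx} and~\ref{flexcondx}.

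For the DTS property, note that the six points $a,b,c,x,y,z$ of the Pasch configuration span exactly $12$ edges, namely every pair except $\{a,x\}$, $\{b,y\}$, $\{c,z\}$. That gives $24$ ordered pairs. The four bidirectional blocks cover these pairs (each unordered block covers all six ordered pairs, twice three), and so do the eight triples of $\mathcal{P}$ ($8\times 3=24$). A direct listing shows each ordered pair occurs exactly once in $\mathcal{P}$. All other blocks are untouched, so the result is a DTS($n$).

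For the Latin condition of Theorem~\ref{LDTScondx}, each remaining bidirectional block pair $\dtr{u,v,w},\dtr{w,v,u}$ satisfies the condition within itself. So only the eight triples of $\mathcal{P}$ need checking: for each $\dtr{p,q,r}\in\mathcal{P}$ one needs some $\dtr{s,q,p}\in\mathcal{P}$. For example, $\dtr{a,b,c}$ requires a triple with middle $b$ and last $a$, and $\dtr{z,b,a}$ is one. The middle points of $\mathcal{P}$ are $b$ and $y$, each appearing four times, and the triples with middle $b$ form the cycle $a\to c\to x\to z\to a$. That cycle is exactly the structure of Theorem~\ref{LDTScondx}, and the same holds for $y$. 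The flexible condition of Theorem~\ref{flexcondx} concerns only unidirectional triples, but it also involves products such as $z\cdot x$, and these are computed inside $\mathcal{P}$ since the relevant pairs lie in the configuration. In fact $\mathcal{P}$ is a $4$-gonal bipyramid with poles $b$ and $y$ and equator $(a,c,x,z)$. By the derivation preceding Theorem~\ref{33}, a bipyramid oriented as there satisfies flexibility. Equivalently, $K_n$ is now decomposed into triangles and one $O_4$, and Theorem~\ref{33} applies directly.

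The main obstacle is the existence input: guaranteeing a Pasch configuration in some STS($n$) for every $n\equiv1,3\pmod6$. The fallback, if a citation is unwanted, is a direct construction. One option is the recursive $2n+1$ doubling theorem above, which preserves properness starting from Examples~\ref{flex7} and~\ref{flex9}, but that alone does not reach all residues. Another is Bose/Skolem constructions, which contain Pasch configurations explicitly. The remaining checks are routine case verification over eight triples.
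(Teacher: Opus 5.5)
Your reduction itself is sound. The eight triples of $\mathcal{P}$ are exactly the oriented bipyramid $O_4$ with poles $b,y$ and equator $(a,c,x,z)$, sitting on the twelve edges of the Pasch configuration. So exchanging one Pasch configuration of an STS($n$) for $\mathcal{P}$ does give a flexible LDTS($n$), and it is proper because $a\cdot c=b\neq y=c\cdot a$. The gap is in the existence input, and it fails at $n=9$: your claim that the STS($9$) contains Pasch configurations is false. The unique STS($9$) is $AG(2,3)$, whose blocks are the triples $\{u,v,w\}\subseteq\mathbb{Z}_3^2$ with $u+v+w=0$. A Pasch configuration $\str{a,b,c}$, $\str{a,y,z}$, $\str{x,b,z}$, $\str{x,y,c}$ there would give $(a+b+c)+(x+y+c)-(a+y+z)-(x+b+z)=2(c-z)=0$, hence $c=z$, which is absurd. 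So for $n=9$ your construction has nothing to act on. Your fallbacks do not help either: the STS($9$) is unique, so a Bose-type construction returns the same anti-Pasch system, and $9=2\cdot 4+1$ while no LDTS($4$) exists. This order must be supplied separately, for instance by Example~\ref{flex9} itself, which has unidirectional triples and is therefore proper. That is exactly what the paper's case (b) produces when $m=3$.

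For the other orders, replace the vague general claim with precise inputs. For $n=7$ use the Fano plane. For $n=13$, the cyclic STS($13$) with base blocks $\str{0,1,4}$, $\str{0,2,7}$ contains the Pasch configuration $\str{0,1,4}$, $\str{0,2,7}$, $\str{1,7,9}$, $\str{2,4,9}$. For $n\ge 15$, the Doyen--Wilson theorem gives an STS($n$) with a Fano subsystem. With $n=9$ patched, your argument is correct and genuinely different from the paper's. The paper builds explicit systems: doubling with Pasch replacement for $n\equiv 3,7\pmod{12}$, a parallel-class variant glued to the LDTS($9$) for $n\equiv 9\pmod{12}$, and GDD inflation using the LDTS($13$) or LDTS($9$) for $n\equiv 1\pmod{12}$. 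That route is self-contained and yields many unidirectional triples. Yours needs only a single $O_4$, but it leans on an external embedding theorem for Steiner triple systems.
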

\textbf{Proof.}
(a) $n\equiv 3,7\pmod{12}$. Put $m=(n-1)/2$ and choose an
STS($m$), $(V,\mathcal{B})$. Let $V'=\{\,x':x \in V\,\}$ and $W=V
\cup V' \cup\{\infty\}$. Construct a collection of triples
$\mathcal{B'}$ as follows. For all $\str{x,y,z} \in \mathcal{B}$,
assign $\str{x,y,z}, \str{x,y',z'}, \str{x',y,z'}$, $\str{x',y',z}
\in \mathcal{B}'$. Further let $\str{x,x',\infty} \in
\mathcal{B}'$ for all $x \in V$. Then $(W,\mathcal{B}')$ is an
STS($n$). In order to obtain a LDTS($n$) replace each Pasch
configuration as above by the set $\mathcal{P}$ of transitive
triples, and retain the sets containing the point~$\infty$ as
Steiner triples. Because the LDTS($n$) is constructed of flexible
components, i.e.\ just the flexible partial LDTS($6$),
$\mathcal{P}$, and the trivial Steiner quasigroup on 3 points, it
is also flexible.

(b) $n\equiv 9\pmod{12}$. Put $m=(n-3)/2$ and choose an
STS($m$), $(V,\mathcal{B})$ which contains a parallel class.
Denote this parallel class by $\Pi$. Let $V'=\{\,x':x\in V\,\}$
and $W=V\cup V'\cup\{\infty_1,\infty_2,\infty_3\}$. Construct a
collection of triples $\mathcal{B}'$ as follows. For all
$\str{x,y,z}\in\Pi$, assign $\str{x,y,z}$, $\str{x',y',z'}$,
$\str{x,x',\infty_1}$, $\str{y,y',\infty_1}$,
$\str{z,z',\infty_1}$, $\str{x,y',\infty_2}$,
$\str{y,z',\infty_2}$, $\str{z,x',\infty_2}$,
$\str{x,z',\infty_3}$, $\str{y,x',\infty_3}$,
$\str{z,y',\infty_3}\in\mathcal{B}'$ and for all
$\str{x,y,z}\in\mathcal{B}\setminus\Pi$, assign $\str{x,y,z}$,
$\str{x,y',z'}$, $\str{x',y,z'}$, $\str{x',y',z}\in\mathcal{B}'$.
Finally let $\str{\infty_1,\infty_2,\infty_3}\in\mathcal{B}'$.
Then $(W,\mathcal{B}')$ is an STS($n$). In order to obtain an
LDTS($n$), replace each Pasch configuration by the
set~$\mathcal{P}$ of transitive triples in the same way as in~(a).
Further replace each collection of eleven triples corresponding to
each block of the parallel class, together with the set
$\str{\infty_1,\infty_2,\infty_3}$, by the flexible LDTS($9$) from
Example~\ref{flex9}, ensuring that the triple
$\str{\infty_1,\infty_2,\infty_3}$ corresponds to a Steiner triple
for each collection.

(c) $n\equiv 1\pmod{12}$.
Take a 3-GDD of type $6^s, s \ge 3$. Inflate each point by
a factor 2 and adjoin an extra point $\infty$. On each inflated group, together with
the point $\infty$, place the flexible LDTS(13) given in Example~\ref{flex13}. On each
inflated block place the set $\mathcal{P}$ of transitive triples $\dtr{a, b, c}$, $\dtr{a, y, z}$,
$\dtr{x, b, z}$, $\dtr{x, y, c}$, $\dtr{z, y, x}$, $\dtr{c, b, x}$, $\dtr{c, y, a}$, $\dtr{z, b, a}$,
with the three sets of points $\{a,x\}, \{b,y\},\{c,z\}$ as the inflated points in the
three groups. We will use $\mathcal{P}$ in this manner throughout. This misses the
value $n=25$ but this can also be constructed in a similar manner by taking a 3-GDD of
type $4^3$, inflating each point by a factor~$2$ and adjoining an extra point~$\infty$.
On each inflated group, together with the point~$\infty$, place the flexible LDTS($9$)
from Example~\ref{flex9} and on each inflated block, place the set of transitive triples~$\mathcal{P}$.
\eproof

~\\
\indent The determination of the spectrum of flexible LDTS($n$) for $n$ even is more intricate,
mainly because there exist no LDTS($n$) for $n=4$, $6$, and $10$, and the only two
LDTS($12$)s are not flexible, \cite{ldts}. The smallest even order flexible system is
LDTS($16$). Again we will use Wilson's fundamental construction, but we will need a
variety of 3-GDDs and initial systems. Flexible LDTS($n$) for $n=16$, $18$, $22$, $24$,
$28$, $30$, $34$, $36$, and $40$ are given as Examples~\ref{flex16} to~\ref{flex40} in
the Appendix and were all found by computer search.

We can now prove a series of propositions
\begin{proposition}
There exists a flexible LDTS($n$) for all $n\equiv 0,16\pmod{24}$.
\end{proposition}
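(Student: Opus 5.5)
Write $n=24t$ or $n=24t+16$. The plan is to use Wilson's fundamental construction in the same way as part~(c) of Proposition~\ref{propflex13mod6}: start from a 3-GDD, inflate every point by a factor of~$2$, and place a flexible piece on each group and on each block.

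\textbf{The pieces.} On every inflated block we place the set $\mathcal{P}$ of eight transitive triples, with the three pairs of inflated points as $\{a,x\}$, $\{b,y\}$, $\{c,z\}$. This covers exactly the ordered pairs between distinct inflated groups. It also satisfies the conditions of Theorems~\ref{LDTScondx} and~\ref{flexcondx} locally, since the point paired with each point under these conditions lies inside the same inflated block. On each inflated group we place a flexible LDTS from the Appendix, and no extra point $\infty$ is needed. Because every pair lies in exactly one piece and each piece is locally Latin and flexible, the union is a flexible LDTS. This is the same reasoning as in Proposition~\ref{propflex13mod6}: a condition on a triple involves only points of the piece containing it.

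\textbf{Choice of GDDs.} A 3-GDD of type $g^u$ exists iff $u\ge 3$, $g(u-1)$ is even and $g^2u(u-1)\equiv 0\pmod 6$. Taking $g=8$ gives type $8^u$ for $u\equiv 0,1\pmod 3$, $u\ge 3$. Inflating by~$2$ gives groups of size $16$, filled with the flexible LDTS($16$), and total order $16u$. This yields $n=16u$ with $u\equiv 0,1\pmod 3$, which is $n\equiv 0,16\pmod{48}$. To get the other residues $24,40\pmod{48}$, I use type $12^u$ (any $u\ge 3$), inflated to groups of size $24$ filled with the flexible LDTS($24$). This gives $n=24u$, covering all $n\equiv 0\pmod{24}$ with $n\ge 72$. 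For $n\equiv 16\pmod{24}$, I use type $12^u 8^1$, inflated to $24^u16^1$, giving $n=24u+16$. A 3-GDD of type $g^um^1$ with $g=12$, $m=8$ needs $u\ge 3$, $m\le g(u-1)$ and the divisibility conditions; these hold here, so this covers $n\ge 88$. Type $8^u$ already gives $n=64$.

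\textbf{Small cases.} The values $16$, $24$ and $40$ come directly from the Appendix. The value $48$ comes from type $8^3$, or from $12^2$ if needed (no; $8^3$ suffices), and $64$ from $8^4$. The remaining values are $72=24\cdot 3$ from $12^3$ and $88$ from $12^3 8^1$.

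\textbf{Main obstacle.} The step needing care is checking that the required mixed GDDs $12^u8^1$ exist for every $u\ge 3$ (via~\cite{CHR},~\cite{Ge}). The fallback, if some small instance fails, is to use $8^u$ or $4^u m^1$ with groups filled by LDTS($16$), LDTS($24$) or LDTS($40$) from the Appendix. Another fallback is the trebling construction applied to a flexible LDTS($16$) built from even bipyramids. The local flexibility verification for $\mathcal{P}$ has already been noted in the paper, so no further work is needed there.
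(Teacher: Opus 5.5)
Your argument is correct and is essentially the paper's proof. Both use Wilson's fundamental construction with inflation by $2$, $\mathcal{P}$ on the inflated blocks and the flexible LDTS($16$), LDTS($24$), LDTS($40$) from the Appendix; the only difference is that for the residues $24,40\pmod{48}$ the paper uses 3-GDDs of types $8^{3s}12^1$ and $8^{3s+1}12^1$, whereas you use $12^u$ and $12^u8^1$ (which do exist for every $u\ge 3$, as you checked), so the fallbacks in your last paragraph are not needed.
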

\textbf{Proof.}
(a) $n\equiv0\pmod{48}$. Take a 3-GDD of type $8^{3s}$,
$s\geq1$. Inflate each point by a factor $2$. On each inflated
group place a flexible LDTS($16$) and on each inflated block,
place the set of transitive triples~$\mathcal{P}$.

(b) $n\equiv16\pmod{48}$.
Proceed as in (a) starting with a 3-GDD of type $8^{3s+1}$, $s\geq 1$.

(c) $n\equiv24\pmod{48}$.
Again proceed as in (a) starting with a 3-GDD of type $8^{3s}12^1$, $s\geq1$,
and in addition on the inflated group of cardinality~$12$ place a flexible LDTS($24$).

(d) $n\equiv40\pmod{48}$. Proceed as in~(c) starting with a 3-GDD of
type $8^{3s+1}12^1$, $s\geq1$. This misses the value $n=40$, but a flexible LDTS($40$)
is Example~\ref{flex40} in the Appendix.
\eproof

\begin{proposition}
There exists a flexible LDTS($n$) for all $n\equiv4,12\pmod{24}$, $n\geq28$ except $n=52$, $60$, $76$, $84$.
\end{proposition}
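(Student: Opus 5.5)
Following the pattern of the previous proposition, I will use Wilson's fundamental construction with inflation by a factor $2$. I place the set $\mathcal{P}$ of eight transitive triples on each inflated block, and a flexible LDTS on each inflated group. Since every inflated group of a GDD with group sizes $g$ has $2g$ points, I need flexible systems of even orders. The base systems available are LDTS($16$), LDTS($24$), LDTS($28$), LDTS($36$) and LDTS($40$) from the Appendix, together with those given by the previous proposition. Flexibility is automatic: every component is flexible, and every pair of points is covered exactly once. The work is therefore a bookkeeping exercise in choosing GDD types whose doubled group sizes are orders already covered.

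I will split into residues modulo $48$, namely $n\equiv 4,12,28,36 \pmod{48}$. With $n\equiv 4,12\pmod{24}$, I use a 3-GDD of type $8^{u}m^1$ with $m$ even and $2m\equiv 12$ or $28\pmod{24}$. Inflating by $2$ gives $16u+2m=n$, with flexible LDTS($16$)s on the groups of size $8$ and a flexible LDTS($2m$) on the special group. Concretely, I take $2m=28$ (so $m=14$) for $n\equiv 4,28\pmod{48}$ and $2m=36$ (so $m=18$) for $n\equiv 12,36\pmod{48}$, with $u\equiv 0$ or $1\pmod 3$ as required. The existence conditions for 3-GDDs of type $g^um^1$ from \cite{CHR} require $u\ge 3$ and $m\le g(u-1)$, plus congruences. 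For $g=8$ these give $u\equiv 0\pmod 3$ with $m\equiv 2\pmod 6$, or $u\equiv 1\pmod 3$ with $m\equiv 2\pmod 6$, or similar. I will check them against \cite{Ge}, and where $m=14$ or $18$ fails the congruence I will switch to the special group of size $12$ ($2m=24$) combined with group size $g=14$ or $18$. Alternatively I will use uniform GDDs of type $14^u$ or $18^u$, inflated to give groups of size $28$ and $36$.

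Small values need separate treatment. The orders $n=28$ and $36$ are Appendix examples. The constraint $m\le g(u-1)$ with $u\ge 3$ means the recursion only starts at $n\ge 16\cdot 3+28=76$ or so. This explains the exceptions $52,60,76,84$, which I expect the recursion simply cannot reach (with $76$ and $84$ presumably failing the congruences for $u=3$). For the remaining small values, such as $n=100$, $108$ and $124$, I will try other GDD types like $8^u 20^1$, or $6^u$ inflated with LDTS($12$)-free ingredients using LDTS($28$)/LDTS($36$) on groups. Alternatively, the Kirkman construction with a flexible LDTS($2n$) and $6s+3$ can supply values $\equiv 4,12\pmod{24}$, e.g.\ from LDTS($16$) with $6s+3\equiv 12 \pmod{24}$, giving $6s+3+16$.

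The main obstacle is the case analysis of GDD existence. For each residue class I must pick a type $g^u m^1$ that satisfies the Colbourn--Hoffman--Rees conditions, uses only even group orders for which flexible systems are already known, and reaches every $n\ge 28$ apart from the four stated exceptions. I would first tabulate $n$ up to about $200$ against candidate types to confirm coverage, then state the general families.
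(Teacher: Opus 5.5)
Your framework (inflate a 3-GDD of type $g^um^1$ by a factor $2$, put $\mathcal{P}$ on the inflated blocks and flexible systems on the inflated groups) is exactly the paper's. But the entire content of this proposition is the choice of GDD types, and your concrete choice does not work as stated.

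Since $16u \bmod 48\in\{0,16,32\}$ for $u\equiv 0,1,2\pmod 3$, the type $8^u14^1$ gives $n=16u+28\equiv 28,44,12\pmod{48}$. The Colbourn--Hoffman--Rees condition $g^2u(u-1)/2+gum\equiv 0\pmod 3$ reduces here to $u(2u-1)\equiv 0\pmod 3$, i.e.\ $u\equiv 0,2\pmod 3$. So $m=14$ can never produce $n\equiv 4\pmod{48}$, and $u\equiv 1\pmod 3$ is inadmissible for it. Similarly, $8^u18^1$ requires $u\equiv 0,1\pmod 3$ and, from $18\le 8(u-1)$, $u\ge 4$; it gives $n\equiv 36,4\pmod{48}$, never $12\pmod{48}$. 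Thus your stated plan never reaches the class $n\equiv 12\pmod{48}$, and it attributes $n\equiv 4\pmod{48}$ to a GDD that does not exist.

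The fallbacks do not rescue it. The Kirkman construction always yields the odd order $6s+3+2n$, and $6s+3\equiv 12\pmod{24}$ is impossible. Inflating $6^u$ would need a flexible LDTS($12$), which does not exist.

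The repair is to swap the roles of $14$ and $18$:
\begin{itemize}
\item $8^u14^1$ with $u\ge 3$, $u\equiv 0,2\pmod 3$ covers $n\equiv 28\pmod{48}$ for $n\ge 76$ and $n\equiv 12\pmod{48}$ for $n\ge 108$;
\item $8^u18^1$ with $u\ge 4$, $u\equiv 0,1\pmod 3$ covers $n\equiv 4\pmod{48}$ for $n\ge 100$ and $n\equiv 36\pmod{48}$ for $n\ge 132$.
\end{itemize}
With the Appendix systems of orders $28$ and $36$, this leaves exactly $52$, $60$, $84$, so $100$, $108$, $124$ need no special treatment. Your account of the exceptions is also off. $8^3 14^1$ satisfies every condition and gives $n=76$, while $84$ fails the size condition $18\le 16$, not a congruence.

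The paper avoids the mod-$48$ bookkeeping altogether by taking $g=12$. The types $12^s14^1$ and $12^s18^1$ exist for every $s\ge 3$: the parity and divisibility conditions hold automatically because $6\mid 12$ and $m$ is even, and $m\le 24\le 12(s-1)$. With a flexible LDTS($24$) on the inflated groups, they give $n=24s+28$ and $n=24s+36$ directly, missing only $52,76$ and $60,84$ respectively. Your corrected $g=8$ route is a valid alternative that additionally reaches $76$.
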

\textbf{Proof.}
(a) $n\equiv4\pmod{24}$.
Take a 3-GDD of type $12^s14^1$, $s\geq3$.
Inflate each point by a factor~$2$.
On each inflated group place a flexible LDTS($24$) or LDTS($28$) as appropriate and
on each inflated block, place the set of transitive triples~$\mathcal{P}$.
This misses the values $n=52$ and $76$.

(b) $n\equiv12\pmod{24}$.
Proceed as in (a) starting with a 3-GDD of type $12^s18^1$, $s\geq3$ and using a
flexible LDTS($36$) instead of an LDTS($28$). This misses the values $n=60$ and $84$.
\eproof

~\\
\indent Before dealing with the next two residue classes we will need two further flexible
systems of orders $42$ and $46$. These can be obtained using the following elementary
construction techniques.

\begin{proposition}\label{44}
\begin{enumerate}
\item[(i)] If there exists a flexible LDTS($n$), then there exists a flexible LDTS($3n-2$).

\item[(ii)] If there exists a flexible LDTS($n$) containing a Steiner triple, then
there exists a flexible LDTS($3n-6$), also containing a Steiner triple.
\end{enumerate}
\end{proposition}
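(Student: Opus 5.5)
The plan is to use Theorem~\ref{33} throughout. A flexible LDTS($n$) is the same thing as a decomposition of $K_n$ into triangles (Steiner triples) and bipyramids $O_k$, $k\geq 3$. So for both parts it suffices to build a decomposition of $K_{3n-2}$, respectively $K_{3n-6}$, into triangles and bipyramids. I will assemble it from three copies of the given system, glued along a common point or a common triangle, and fill the remaining ``cross'' edges with triangles.

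For (i), write $3n-2 = 3(n-1)+1$. Take pairwise disjoint sets $V_1,V_2,V_3$, each of size $m=n-1$, together with a point $\infty$. On each $V_i\cup\{\infty\}$ place a copy of the given flexible LDTS($n$), viewed as a decomposition of $K_n$. These three copies cover exactly the edges inside each $V_i$ and all edges through $\infty$, each once. The edges not yet covered form the complete tripartite graph $K_{m,m,m}$ on $V_1,V_2,V_3$. Choose any Latin square $L$ of order $m$ (for instance the Cayley table of $\mathbb{Z}_m$), and identify $V_1,V_2,V_3$ with rows, columns and symbols. Then the triangles $\{r,c,L(r,c)\}$ decompose $K_{m,m,m}$, since every row/column, row/symbol and column/symbol pair occurs exactly once. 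Take all these triangles as Steiner triples. The result is a decomposition of $K_{3n-2}$ into triangles and bipyramids, so by Theorem~\ref{33} it gives a flexible LDTS($3n-2$). Nothing in this argument depends on the parity of $m$, so no inflation or $\mathcal{P}$ is needed.

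For (ii), let $\{a,b,c\}$ be a Steiner triple of the given system, and write $3n-6 = 3(n-3)+3$. Take disjoint sets $V_1,V_2,V_3$ of size $m=n-3$ together with the points $a,b,c$. On each $V_i\cup\{a,b,c\}$ place a copy of the flexible LDTS($n$) in which $\{a,b,c\}$ is the distinguished Steiner triple, and keep the triangle $\{a,b,c\}$ only once overall. The only edges covered by more than one copy are $ab$, $bc$ and $ca$, and in every copy these lie in that single common triangle. So after this identification every edge inside some $V_i\cup\{a,b,c\}$ is covered exactly once. The remaining edges again form $K_{m,m,m}$, which I decompose into Steiner triples via a Latin square of order $m$ exactly as in (i). Theorem~\ref{33} then gives a flexible LDTS($3n-6$), and it contains the Steiner triple $\{a,b,c\}$ (as well as all the Latin-square triples), as required.

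The one point needing care is the overlap bookkeeping in (ii): I must check that the three copies intersect only in the block $\{a,b,c\}$. This holds because a Steiner triple of a decomposition is the unique block containing each of its three edges. I must also check that placing bidirectional Steiner triples alongside the bipyramid components does not disturb flexibility. This is exactly what the characterization in Theorem~\ref{33} guarantees, since flexibility depends only on the decomposition into triangles and bipyramids. With that checked, both constructions are routine.
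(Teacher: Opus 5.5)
Your proof is correct and is essentially the paper's construction. The paper also uses three copies of the given system sharing the point $\infty$ (respectively the common Steiner triple), and covers the remaining $K_{m,m,m}$ by the Steiner triples $\{i_0,j_1,L(i,j)_2\}$ of a Latin square of order $m=n-1$ (respectively $m=n-3$). The only difference is presentational: you verify the result through the triangle/bipyramid decomposition of Theorem~\ref{33}, whereas the paper works directly with the transitive triples and leaves the flexibility check implicit.
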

\textbf{Proof.}
(i) Take three copies of the LDTS($n$) on point sets $\{\infty,0_i,1_i,\dots,(n-2)_i\}$,
$i\in\{0,1,2\}$ respectively. Then take a Latin square $L(i,j)$ of order~$n-1$ on the set
$\{0,1,\dots,n-2\}$ and adjoin all Steiner triples $\{i_0,j_1,L(i,j)_2\}$,
$0\leq i\leq n-2$, $0\leq j\leq n-2$.

(ii) Take three copies of the LDTS($n$) on point sets $\{\infty_1,\infty_2,\infty_3,0_i,1_i,$\\$\dots,(n-4)_i\}$,
$i\in\{0,1,2\}$ respectively, where $\{\infty_1,\infty_2,\infty_3\}$ is a Steiner triple
in all three systems. Then take a Latin square $L(i,j)$ of order $n-3$ on the set $\{0,1,\dots,n-4\}$
and adjoin all Steiner triples $\{i_0,j_1,L(i,j)_2\}$, $0\leq i\leq n-4$, $0\leq j\leq n-4$.
\eproof

~\\
\indent We now have the required LDTS($42$) and LDTS($46$) using the
flexible LDTS($16$) from Example~\ref{flex16}.

\begin{proposition}
There exists a flexible LDTS($n$) for all $n\equiv6,10\pmod{12}$, $n\geq18$ except
$n=58$, $66$, $70$, $78$, $82$.
\end{proposition}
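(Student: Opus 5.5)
The plan is to use Wilson's fundamental construction again, inflating a 3-GDD by a factor of $2$ exactly as in the preceding propositions. Each inflated block receives the flexible partial system $\mathcal{P}$ on six points. Each inflated group receives a flexible LDTS of twice the group size.

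The residues $n\equiv 6,10\pmod{12}$ are the $n\equiv 6,10,18,22\pmod{24}$. Since inflation doubles, we need GDDs whose groups have sizes $g$ with flexible LDTS($2g$) available. The available orders are $16$, $18$, $22$, $24$, $28$, $30$, $34$, $36$, $40$ from the Appendix, together with $42$ and $46$ from Proposition~\ref{44}. These give $g\in\{8,9,11,12,14,15,17,18,20,21,23\}$.

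The natural choice is a 3-GDD of type $g^u m^1$ with base $g\equiv 0\pmod 4$ or $g=8$ or $12$, as before, and with one exceptional group. Take type $12^s m^1$ with $2m\in\{18,22,30,34,42,46\}$, i.e.\ $m\in\{9,11,15,17,21,23\}$. Here $2(12s+m)=24s+2m$, which covers all four residues mod $24$:
\begin{itemize}
\item[] $2m=18,42$ gives $n\equiv 18\pmod{24}$;
\item[] $2m=22,46$ gives $n\equiv 22\pmod{24}$;
\item[] $2m=30$ gives $n\equiv 6\pmod{24}$;
\item[] $2m=34$ gives $n\equiv 10\pmod{24}$.
\end{itemize}
I would check existence of each 3-GDD of type $12^s m^1$ in~\cite{CHR} or~\cite{Ge}. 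The conditions are $s\ge 3$ and $m\le 6(s-1)$, with the usual parity and divisibility requirements, which hold since $g=12$ is even and divisible by $3$. Where convenient I would also use type $6^s m^1$ with flexible LDTS($12$)s, but those do not exist. So the base group must stay at $8$ or $12$, and type $8^s m^1$ is used where the divisibility permits.

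Then the key steps, in order, are these. First, fix each residue class modulo $24$. Second, choose $m$ so that $2m$ is a known flexible order in that class. Third, apply inflation, placing LDTS($24$) or LDTS($16$) on the ordinary inflated groups, LDTS($2m$) on the special group, and $\mathcal{P}$ on the inflated blocks. Fourth, list which $n$ are reached. Flexibility and the Latin property follow as before, since every ordered pair lies in exactly one flexible component. The small values $18$, $22$, $30$, $34$, $42$, $46$ are direct, either from the Appendix or from Proposition~\ref{44}.

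The main obstacle is the bookkeeping of small exceptions. The GDD condition $m\le g(u-1)/2$, together with $u\ge3$, forces the smallest $n$ in each class to come only from larger $m$. So I would consider several GDD families per class, for example $12^s m^1$, $8^s m^1$ (for $s\equiv 0,1 \pmod 3$), and possibly $4$-fold variants with $14$ or $18$ as base. I would then verify that every $n\ge 18$ in the classes is covered except $58$, $66$, $70$, $78$, $82$. These are exactly the values where no admissible GDD with available group sizes exists, for example $58=2\cdot 29$ with $29$ not expressible as $12s+m$ with the admissible $m$. I expect the case-by-case check near $50$–$90$ to be the delicate part.
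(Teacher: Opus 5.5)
There is a genuine gap: your overall framework (inflate a 3-GDD by $2$, put $\mathcal{P}$ on the inflated blocks and flexible systems on the inflated groups) matches the paper's, but the GDDs you propose do not exist. In any 3-GDD on $v$ points, the blocks through a point $x$ pair up the $v-|G_x|$ points outside the group of $x$, so $v-|G_x|$ must be even for every group. Here $v=n/2\equiv 3,5\pmod 6$ is odd, hence \emph{every} group must have odd size. Concretely, in type $12^s m^1$ with $m\in\{9,11,15,17,21,23\}$, a point of a $12$-group is joined to $12(s-1)+m$ points, which is odd. The same failure rules out $8^s m^1$ and any base of size $14$ or $18$. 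Your remark that the parity requirements hold ``since $g=12$ is even'' is exactly backwards, and no small-case bookkeeping can rescue the construction. (Separately, the bound for type $g^um^1$ is $m\le g(u-1)$, i.e.\ $m\le 12(s-1)$, not $6(s-1)$.)

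The missing idea is to make the base groups odd. Then the flexible LDTS($2g$) placed on them must have one of the orders $18,22,30,34,42,46$. The paper takes $g=9$ with the flexible LDTS($18$) and works modulo $36$:
\begin{itemize}
\item type $9^{2s+1}$, $s\ge 1$, handles $n\equiv 18\pmod{36}$;
\item type $9^{2s}m^1$, $s\ge 2$, $m\in\{11,15,17,21,23\}$, with a flexible LDTS($2m$) on the special inflated group, handles $n\equiv 22,30,34,42,46\pmod{36}$.
\end{itemize}
In the second family the number of $9$-groups must be even, so that a point of the $m$-group sees an even number $9u$ of points. The smallest value in each class is the sporadic system itself. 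The values $36+2m=58,66,70,78,82$ are missed because $9^2m^1$ would have only three groups of unequal sizes, which is impossible. That is precisely why these five values are the exceptions in the statement.
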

\textbf{Proof.}
(a) $n\equiv18\pmod{36}$.
Take a 3-GDD of type $9^{2s+1}$, $s\geq1$.
Inflate each point by a factor~$2$.
On each inflated group place a flexible LDTS(18) and on each inflated block, place the
set of transitive triples~$\mathcal{P}$.

(b) $n\equiv 22,30,34,42,46\pmod{36}$.
Proceed as in~(a) starting with a 3-GDD of type $9^{2s}m^1$, $s\geq2$ where $m\in\{11,15,17,21,23\}$
and in addition on the single larger inflated group, place a flexible LDTS($2m$).
\eproof

~\\
\indent It remains to deal with the nine exceptional values.

\begin{proposition}
There exist flexible LDTS($n$) for $n\in\{52, 58, 60, 66, 70,$\\ $76, 78, 82, 84\}$.
\end{proposition}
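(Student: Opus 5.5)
The plan is to obtain all nine orders from Proposition~\ref{44}, together with a slight generalisation of it, applied to the flexible systems of Examples~\ref{flex16}--\ref{flex40} and to the systems already built in this section. The generalisation is as follows. Suppose a flexible LDTS($n$) contains a flexible sub-LDTS($k$) on a set $K$, and we take three copies of it on the point sets $K\cup\{0_i,\dots,(n-k-1)_i\}$, $i\in\{0,1,2\}$, sharing $K$. Adjoin the Steiner triples $\{i_0,j_1,L(i,j)_2\}$, where $L$ is a Latin square of order $n-k$. The result is a decomposition of $K_{3n-2k}$ into triangles and bipyramids, i.e.\ a flexible LDTS($3n-2k$).

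To check this, note that a pair inside $K$ is covered once, by the common subsystem, which we count only once. A pair between $K$ and a copy, or inside a single copy, is covered by that copy. A pair between two different copies is covered by the Latin square triples. Flexibility is local to each component, so Theorem~\ref{33} applies. The cases $k=1$ and $k=3$ (a Steiner triple) are exactly Proposition~\ref{44}.

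I would then match the orders as follows:
\[
52=3\cdot18-2,\quad 70=3\cdot24-2,\quad 82=3\cdot28-2,
\]
using Proposition~\ref{44}(i) with the LDTS($18$), LDTS($24$) and LDTS($28$) of the Appendix. Next,
\[
60=3\cdot22-6,\quad 66=3\cdot24-6,\quad 78=3\cdot28-6,\quad 84=3\cdot30-6,
\]
using Proposition~\ref{44}(ii), provided the appendix systems of orders $22,24,28,30$ contain a Steiner triple. The two remaining values are $58$ and $76$. Neither is of the form $3n-2$ or $3n-6$ with $n$ an available even order; note that $n=20$ and $n=26$ are not admissible orders. For these I would use $k=7$:
\[
58=3\cdot24-14,\qquad 76=3\cdot30-14.
\]
This needs a flexible LDTS($24$) and a flexible LDTS($30$) each containing a copy of the flexible LDTS($7$) of Example~\ref{flex7} (or any flexible or Steiner subsystem of order $7$) as a subsystem.

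The main obstacle is certifying these structural side conditions for the specific appendix systems: a Steiner triple in the systems of orders $22,24,28,30$, and a subsystem of order $7$ in those of orders $24$ and $30$. My first attempt would be to inspect the appendix examples directly. Failing that, I would rerun the computer search with the required substructure prescribed, for example by fixing a flexible LDTS($7$) on seven points and searching for completions.

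If a subsystem of order $7$ proves awkward, there are fallbacks. For $76$ one could look for a $3$-GDD-type inflation such as those used above with a different mix of group sizes. Alternatively one could use Proposition~\ref{44}(ii) twice, or a direct computer search, accepting that these two orders may simply have to be found as further explicit examples.
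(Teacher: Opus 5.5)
You follow the paper exactly for $52,70,82$ (Proposition~\ref{44}(i) with orders $18,24,28$) and for $60,66,78,84$ (Proposition~\ref{44}(ii) with orders $22,24,28,30$). The proviso you leave open is immediate: every appendix example has a nonempty set $\mathcal{T}$ of Steiner triples. Your $k=7$ generalisation for $58=3\cdot 24-14$ is also precisely the paper's argument, and its side condition needs no search. Example~\ref{flex24} is explicitly a flexible LDTS($24$) containing an LDTS($7$); the triples $\mathcal{T}_1\cup\mathcal{D}_1$ give three disjoint copies of Example~\ref{flex7}.

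The genuine gap is $n=76$. Your route $76=3\cdot 30-14$ needs a flexible LDTS($30$) with a subsystem of order $7$. Nothing in the paper provides one, and Example~\ref{flex30} does not have one. Such a subsystem would be either a Fano plane of Steiner triples or a copy of Example~\ref{flex7}. The latter means an $O_4$ plus a seventh point $p$ lying in Steiner triples with the pole pair and with each pair of opposite equator points. The $O_4$'s of Example~\ref{flex30} are the translates of $[2_0:9_0,5_1,12_1,6_1:5_0]$. The Steiner triple on the poles is $\{2_0,3_0,5_0\}$, forcing $p=3_0$. But $\{3_0,9_0,12_1\}$ is not a Steiner triple, since no Steiner triple there has exactly two points in layer $0$. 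A Fano subplane is excluded by a similar inspection of the Steiner triples. So this case rests on a computer search you have not done, or on fallbacks you do not carry out.

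The paper handles $76=5\cdot 15+1$ directly. It takes a 3-GDD of type $15^5$, places the flexible LDTS($16$) of Example~\ref{flex16} on each group together with a common extra point $\infty$, and keeps the GDD blocks as Steiner triples. Your inflation fallback can also be made to work. A 3-GDD of type $8^3 14^1$ satisfies the existence conditions for type $g^um^1$. Inflating it by $2$, with flexible LDTS($16$)s and the LDTS($28$) on the groups and $\mathcal{P}$ on the blocks, gives order $76$.
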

\textbf{Proof.}
(a) The values $n=52,70,82$ can be obtained from the construction of Proposition~\ref{44}~(i)
using examples on $18$, $24$, $28$ points respectively given in the Appendix.

(b) The values $n=60,66,78,84$ can be obtained from the construction of Proposition~\ref{44}~(ii)
using examples on $22$, $24$, $28$, $30$ points respectively again given in the Appendix.

(c) For $n=76$, take a 3-GDD of type $15^5$ and on each group together with a
further point~$\infty$, place the flexible LDTS($16$) in Example~\ref{flex16}. Each block
is a Steiner triple.

(d) The value $n=58$ is the most difficult. We shall use the same approach
as for the non-flexible case. Define sets
$\mathcal{N}=\{\,\infty_j:0 \leq j \leq 6\,\}$,
$\mathcal{M}_k=\{\,i_k:0 \leq i \leq 16\,\}$, $k=0$, $1$, $2$.
Take three copies of the flexible LDTS($24$) containing an LDTS($7$) as a subsystem,
constructed as in Example~\ref{flex24} on point sets $\mathcal{N} \cup \mathcal{M}_0$,
$\mathcal{N} \cup \mathcal{M}_1$, $\mathcal{N} \cup \mathcal{M}_2$ respectively, in each
case with the LDTS($7$) on the set $\mathcal{N}$. Then take a Latin square $L(i,j)$ of
side~17 on the set $\{0,1,\dots,16\}$ and adjoin all Steiner triples $\str{i_0,j_1,L(i,j)_2}$.
\eproof

~\\
\indent Collecting together all the results in this section gives the following theorem.

\begin{theorem}
The existence spectrum of flexible LDTS($n$)s is $n\equiv0,1\pmod{3}$, $n\neq4$, $6$, $10$, $12$.
\end{theorem}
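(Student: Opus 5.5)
The theorem is the assembly of the existence results of this section with the necessary conditions inherited from directed triple systems and from the small-order enumeration. The plan is to prove the two directions separately: first necessity, then sufficiency by checking that every admissible residue class is covered.

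For necessity, a flexible LDTS($n$) is in particular a DTS($n$), so $n\equiv 0,1\pmod 3$ by~\cite{HM}. To exclude $n=4,6,10$ I will invoke Theorem~\ref{enum}: there are no DTS-quasigroups, and hence no LDTS($n$), of these orders. For $n=12$, Theorem~\ref{enum} gives exactly two DTS-quasigroups. I will cite~\cite{ldts}, as recalled earlier in this section, that both are non-flexible. Hence no flexible LDTS($12$) exists.

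For sufficiency, I split the admissible $n$ by parity.
\begin{enumerate}
\item[(1)] Odd $n$. Admissibility means $n\equiv 1,3\pmod 6$, and Proposition~\ref{propflex13mod6} supplies a flexible LDTS($n$); the Steiner quasigroups themselves would also do, but the proposition even gives proper ones.
\item[(2)] Even $n$. Here $n\equiv 0,4\pmod 6$, that is, $n\equiv 0,4,6,10,12,16,18,22\pmod{24}$. I will check that the propositions for even orders cover every such $n$ other than $4,6,10,12$.
\end{enumerate}

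The even-order bookkeeping runs through the residues mod $24$ as follows.
\begin{enumerate}
\item[(a)] Classes $0,16$: covered for all such $n\ge 16$ (including $16$ and $24$ via the Appendix examples and the proof's use of $s\ge 1$ together with Example~\ref{flex40}).
\item[(b)] Classes $4,12$: covered for $n\ge 28$ except $52,60,76,84$, with $n=28,36$ given in the Appendix.
\item[(c)] Classes $6,10,18,22$, that is, $n\equiv 6,10\pmod{12}$: covered for $n\ge18$ except $58,66,70,78,82$.
\item[(d)] The nine exceptional values are supplied by the last proposition.
\end{enumerate}
The only remaining even admissible orders are below the thresholds: $n=4,12$ in classes $4,12$, and $n=6,10$ in the classes mod $12$. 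These are precisely the excluded values.

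The main obstacle is not mathematical depth but completeness of the case analysis. In particular I must verify that the GDD-based parts indeed reach the smallest members of each subclass, as claimed in the proofs. For example, in part (b) of the $n\equiv 6,10\pmod{12}$ proposition, type $9^{2s}m^1$ with $s\ge2$ starts at $2(36+m)$. The values below this are $22,30,34,42,46$ (Appendix examples and Proposition~\ref{44}) together with $58,66,70,78,82$ (the exceptional proposition). I would tabulate these small values explicitly to confirm no gap remains.
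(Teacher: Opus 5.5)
Your proposal is correct and is essentially the paper's argument: the paper simply collects the section's propositions, together with the non-existence of LDTS($n$) for $n=4,6,10$ and the non-flexibility of the two DTS-quasigroups of order $12$ from~\cite{ldts}, and your explicit residue bookkeeping (the GDD thresholds and the nine exceptional values) checks out. The only slip is your aside that Proposition~\ref{propflex13mod6} gives proper systems for every $n\equiv 1,3\pmod 6$, which fails at $n=3$ because the unique DTS-quasigroup of order $3$ is the Steiner one; this does not affect the theorem, since properness is not required.
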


\appendix
\section*{Appendix. Examples of flexible LDTSs}
\renewcommand\thesection{A}
\setcounter{theorem}{0}
The following examples were obtained by
computer with the help of the model builder Mace4 \cite{Mace4}
using an algebraic description of a DTS-quasigroup, see
\cite{basics}. We denote the elements $(i,j)\in\mathcal{Z}_{m}
\times \mathcal{Z}_{n}$ as~$i_j$. For simplicity, we omit commas
from the triples.

{\sloppy
\begin{example}\label{flex16}
Flexible LDTS(16).\\
$V = (\mathcal{Z}_3 \times \mathcal{Z}_5) \cup \{\infty\}$.\\
The triples are obtained from the following starter blocks under the action of the mapping $i_j \mapsto (i + 1)_j$,
with $\infty$ as a fixed point.\\
The starter blocks for $\mathcal{T}$ are
$\str{0_0\,1_0\,2_0}$, $\str{0_0\,1_3\,1_4}$, $\str{0_1\,2_2\,0_4}$, $\str{0_1\,0_3\,2_3}$, $\str{0_2\,2_3\,\infty}$,
and for $\mathcal{D}$ are
$\dtr{1_2\,0_0\,0_4}$, $\dtr{0_4\,0_0\,2_3}$, $\dtr{2_3\,0_0\,1_2}$,
$\dtr{1_2\,1_4\,2_3}$, $\dtr{2_3\,1_4\,0_4}$, $\dtr{0_4\,1_4\,1_2}$,
$\dtr{0_0\,0_1\,1_1}$, $\dtr{1_1\,0_1\,2_4}$, $\dtr{2_4\,0_1\,0_0}$,
$\dtr{0_0\,\infty\,2_4}$, $\dtr{2_4\,\infty\,1_1}$, $\dtr{1_1\,\infty\,0_0}$,
$\dtr{1_0\,0_2\,0_1}$, $\dtr{0_1\,0_2\,1_2}$, $\dtr{1_2\,0_2\,1_0}$,
$\dtr{1_0\,1_3\,1_2}$, $\dtr{1_2\,1_3\,0_1}$, $\dtr{0_1\,1_3\,1_0}$.
\end{example}

\begin{example}\label{flex18}
Flexible LDTS(18).\\
$V = \mathcal{Z}_3 \times \mathcal{Z}_6$.\\
The triples are obtained from the following starter blocks under the action of the mapping $i_j \mapsto (i + 1)_j$.\\
The starter blocks for $\mathcal{T}$ are
$\str{0_0\,1_0\,2_0}$, $\str{0_1\,0_4\,1_5}$, $\str{0_2\,1_2\,2_2}$, $\str{0_4\,1_4\,2_4}$,
and for $\mathcal{D}$ are
$\dtr{1_3\,0_0\,0_5}$, $\dtr{0_5\,0_0\,0_4}$, $\dtr{0_4\,0_0\,1_3}$,
$\dtr{1_3\,2_3\,0_4}$, $\dtr{0_4\,2_3\,0_5}$, $\dtr{0_5\,2_3\,1_3}$,
$\dtr{2_0\,0_1\,2_1}$, $\dtr{2_1\,0_1\,1_3}$, $\dtr{1_3\,0_1\,2_0}$,
$\dtr{2_0\,1_4\,1_3}$, $\dtr{1_3\,1_4\,2_1}$, $\dtr{2_1\,1_4\,2_0}$,
$\dtr{1_0\,0_2\,0_1}$, $\dtr{0_1\,0_2\,2_5}$, $\dtr{2_5\,0_2\,1_0}$,
$\dtr{1_0\,0_5\,2_5}$, $\dtr{2_5\,0_5\,0_1}$, $\dtr{0_1\,0_5\,1_0}$,
$\dtr{0_0\,0_3\,0_2}$, $\dtr{0_2\,0_3\,0_5}$, $\dtr{0_5\,0_3\,2_2}$,
$\dtr{2_2\,0_3\,0_1}$, $\dtr{0_1\,0_3\,1_2}$, $\dtr{1_2\,0_3\,0_0}$,
$\dtr{0_0\,1_4\,1_2}$, $\dtr{1_2\,1_4\,0_1}$, $\dtr{0_1\,1_4\,2_2}$,
$\dtr{2_2\,1_4\,0_5}$, $\dtr{0_5\,1_4\,0_2}$, $\dtr{0_2\,1_4\,0_0}$.
\end{example}

\begin{example}\label{flex22}
Flexible LDTS(22).\\
$V = \mathcal{Z}_{11} \times \mathcal{Z}_2$.\\
The triples are obtained from the following starter blocks under the action of the mapping $i_j \mapsto (i + 1)_j$.\\
The starter blocks for $\mathcal{T}$ are
$\str{0_0\,1_0\,3_0}$, $\str{0_0\,5_1\,10_1}$,
and for $\mathcal{D}$ are
$\dtr{4_0\,0_0\,1_1}$, $\dtr{1_1\,0_0\,6_0}$, $\dtr{6_0\,0_0\,9_1}$,
$\dtr{9_1\,0_0\,0_1}$, $\dtr{0_1\,0_0\,4_0}$, $\dtr{4_0\,8_1\,0_1}$,
$\dtr{0_1\,8_1\,9_1}$, $\dtr{9_1\,8_1\,6_0}$, $\dtr{6_0\,8_1\,1_1}$,
$\dtr{1_1\,8_1\,4_0}$.
\end{example}

\begin{example}\label{flex24}
Flexible LDTS(24) containing an LDTS(7) as a subsystem.\\
$V=\mathcal{Z}_{8} \times \mathcal{Z}_{3}$.\\
This system in fact contains three disjoint LDTS(7)s on point sets
$\{0_i,1_i,2_i,$\\$3_i,4_i,5_i,6_i\}$, $i \in \{0,1,2\}$,
respectively.
The triples are obtained from the following starter blocks under the action of the mapping $i_j \mapsto i_{j+1}$.\\
The starter blocks for the Steiner triples $\mathcal{T}_1$ are
$\str{0_0\,1_0\,2_0}$, $\str{0_0\,3_0\,4_0}$,\\ $\str{0_0\,5_0\,6_0}$,
and for the unidirectional triples $\mathcal{D}_1$ are
$\dtr{3_0\,1_0\,5_0}$, $\dtr{5_0\,1_0\,4_0}$, $\dtr{4_0\,1_0\,6_0}$, $\dtr{6_0\,1_0\,3_0}$
$\dtr{3_0\,2_0\,6_0}$, $\dtr{6_0\,2_0\,4_0}$, $\dtr{4_0\,2_0\,5_0}$, $\dtr{5_0\,2_0\,3_0}$.
The starter blocks for the remaining Steiner triples $\mathcal{T}_2$ are
$\str{0_0\,6_1\,2_2}$,\\ $\str{1_0\,1_1\,1_2}$, $\str{1_0\,5_1\,5_2}$, $\str{1_0\,6_1\,6_2}$,
$\str{2_0\,3_1\,4_2}$, $\str{2_0\,5_1\,3_2}$, $\str{2_0\,5_2\,7_1}$, $\str{3_0\,5_1\,4_2}$,
$\str{4_0\,4_1\,4_2}$, $\str{4_0\,5_1\,6_2}$, $\str{5_0\,6_2\,7_1}$,
and for the unidirectional triples $\mathcal{D}_2$ are
$\dtr{1_1\,0_0\,7_0}$, $\dtr{7_0\,0_0\,4_2}$, $\dtr{4_2\,0_0\,1_1}$, $\dtr{1_1\,7_2\,4_2}$,
$\dtr{4_2\,7_2\,7_0}$, $\dtr{7_0\,7_2\,1_1}$, $\dtr{3_1\,1_0\,7_0}$, $\dtr{7_0\,1_0\,3_2}$,
$\dtr{3_2\,1_0\,3_1}$, $\dtr{3_1\,6_0\,3_2}$, $\dtr{3_2\,6_0\,7_0}$, $\dtr{7_0\,6_0\,3_1}$,
$\dtr{0_1\,3_0\,7_0}$, $\dtr{7_0\,3_0\,0_2}$, $\dtr{0_2\,3_0\,0_1}$, $\dtr{0_1\,5_0\,0_2}$,
$\dtr{0_2\,5_0\,7_0}$, $\dtr{7_0\,5_0\,0_1}$, $\dtr{1_1\,2_0\,0_2}$, $\dtr{0_2\,2_0\,6_1}$,
$\dtr{6_1\,2_0\,7_2}$, $\dtr{7_2\,2_0\,2_2}$, $\dtr{2_2\,2_0\,1_1}$, $\dtr{1_1\,4_0\,2_2}$,
$\dtr{2_2\,4_0\,7_2}$, $\dtr{7_2\,4_0\,6_1}$, $\dtr{6_1\,4_0\,0_2}$, $\dtr{0_2\,4_0\,1_1}$.
Put $\mathcal{T}=\mathcal{T}_1\cup\mathcal{T}_2$ and $\mathcal{D}=\mathcal{D}_1\cup\mathcal{D}_2$.
\end{example}

\begin{example}\label{flex28}
Flexible LDTS(28).\\
$V = \mathcal{Z}_{14} \times \mathcal{Z}_2$.\\
The triples are obtained from the following starter blocks under the action of the mapping $i_j \mapsto (i + 1)_j$.\\
The starter block for $\mathcal{T}$ is $\str{0_0\,1_0\,3_0}$, and for $\mathcal{D}$ are
$\dtr{4_0\,0_0\,3_1}$, $\dtr{3_1\,0_0\,9_0}$, $\dtr{9_0\,0_0\,1_1}$,
$\dtr{1_1\,0_0\,0_1}$, $\dtr{0_1\,0_0\,4_0}$, $\dtr{4_0\,11_1\,0_1}$,
$\dtr{0_1\,11_1\,1_1}$, $\dtr{1_1\,11_1\,9_0}$, $\dtr{9_0\,11_1\,3_1}$,
$\dtr{3_1\,11_1\,4_0}$, $\dtr{2_0\,0_1\,10_0}$, $\dtr{10_0\,0_1\,5_1}$,
$\dtr{5_1\,0_1\,12_1}$, $\dtr{12_1\,0_1\,3_0}$, $\dtr{3_0\,0_1\,9_0}$,
$\dtr{9_0\,0_1\,2_0}$.
\end{example}

\begin{example}\label{flex30}
Flexible LDTS(30).\\
$V = \mathcal{Z}_{15} \times \mathcal{Z}_2$.\\
The triples are obtained from the following starter blocks under the action of the mapping $i_j \mapsto (i + 1)_j$.\\
The starter blocks for $\mathcal{T}$ are
$\str{0_0\,1_0\,3_0}$, $\str{0_0\,5_0\,10_0}$, $\str{0_0\,9_1\,13_1}$, $\str{0_1\,5_1\,10_1}$,
and for $\mathcal{D}$ are
$\dtr{0_0\,6_0\,5_1}$, $\dtr{5_1\,6_0\,8_1}$, $\dtr{8_1\,6_0\,0_0}$,
$\dtr{0_0\,6_1\,8_1}$, $\dtr{8_1\,6_1\,5_1}$, $\dtr{5_1\,6_1\,0_0}$,
$\dtr{9_0\,2_0\,5_1}$, $\dtr{5_1\,2_0\,12_1}$, $\dtr{12_1\,2_0\,6_1}$,
$\dtr{6_1\,2_0\,9_0}$, $\dtr{9_0\,5_0\,6_1}$, $\dtr{6_1\,5_0\,12_1}$,
$\dtr{12_1\,5_0\,5_1}$, $\dtr{5_1\,5_0\,9_0}$.
\end{example}

\begin{example}\label{flex34}
Flexible LDTS(34).\\
$V = \mathcal{Z}_{17} \times \mathcal{Z}_2$.\\
The triples are obtained from the following starter blocks under the action of the mapping $i_j \mapsto (i + 1)_j$.\\
The starter blocks for $\mathcal{T}$ are
$\str{0_0\,1_0\,3_0}$, $\str{0_0\,4_0\,9_0}$, $\str{0_0\,6_0\,0_1}$,
and for $\mathcal{D}$ are
$\dtr{7_0\,0_0\,6_1}$, $\dtr{6_1\,0_0\,2_1}$, $\dtr{2_1\,0_0\,7_0}$,
$\dtr{7_0\,5_1\,2_1}$, $\dtr{2_1\,5_1\,6_1}$, $\dtr{6_1\,5_1\,7_0}$,
$\dtr{0_1\,9_0\,6_1}$, $\dtr{6_1\,9_0\,16_1}$, $\dtr{16_1\,9_0\,14_1}$,
$\dtr{14_1\,9_0\,5_1}$, $\dtr{5_1\,9_0\,0_1}$, $\dtr{0_1\,13_0\,5_1}$,
$\dtr{5_1\,13_0\,14_1}$,\\ $\dtr{14_1\,13_0\,16_1}$, $\dtr{16_1\,13_0\,6_1}$,
$\dtr{6_1\,13_0\,0_1}$.
\end{example}

\begin{example}\label{flex36}
Flexible LDTS(36).\\
$V = \mathcal{Z}_{18} \times \mathcal{Z}_2$.\\
The triples are obtained from the following starter blocks under the action of the mapping $i_j \mapsto (i + 1)_j$.\\
The starter blocks for $\mathcal{T}$ are
$\str{0_0\,1_0\,3_0}$, $\str{0_0\,6_0\,12_0}$, $\str{0_1\,6_1\,12_1}$,
and for $\mathcal{D}$ are
$\dtr{15_0\,4_0\,16_1}$, $\dtr{16_1\,4_0\,17_1}$, $\dtr{17_1\,4_0\,15_0}$,
$\dtr{15_0\,6_1\,17_1}$, $\dtr{17_1\,6_1\,16_1}$,\\ $\dtr{16_1\,6_1\,15_0}$,
$\dtr{5_0\,0_0\,5_1}$, $\dtr{5_1\,0_0\,14_1}$, $\dtr{14_1\,0_0\,14_0}$,
$\dtr{14_0\,0_0\,5_0}$, $\dtr{2_0\,8_1\,6_1}$,\\ $\dtr{6_1\,8_1\,16_0}$,
$\dtr{16_0\,8_1\,13_1}$, $\dtr{13_1\,8_1\,10_0}$, $\dtr{10_0\,8_1\,2_0}$,
$\dtr{2_0\,9_1\,10_0}$, $\dtr{10_0\,9_1\,13_1}$,\\ $\dtr{13_1\,9_1\,16_0}$,
$\dtr{16_0\,9_1\,6_1}$, $\dtr{6_1\,9_1\,2_0}$.
\end{example}

\begin{example}\label{flex40}
Flexible LDTS(40).\\
$V = \mathcal{Z}_{20} \times \mathcal{Z}_2$.\\
The triples are obtained from the following starter blocks under the action of the mapping $i_j \mapsto (i + 1)_j$.\\
The starter blocks for $\mathcal{T}$ are
$\str{0_0\,1_0\,3_0}$, $\str{0_0\,4_0\,9_0}$, $\str{0_0\,8_0\,0_1}$,
and for $\mathcal{D}$ are
$\dtr{0_0\,5_1\,9_1}$, $\dtr{9_1\,5_1\,6_0}$, $\dtr{6_0\,5_1\,0_0}$,
$\dtr{0_0\,14_1\,6_0}$, $\dtr{6_0\,14_1\,9_1}$, $\dtr{9_1\,14_1\,0_0}$,
$\dtr{13_0\,0_0\,15_1}$, $\dtr{15_1\,0_0\,17_1}$, $\dtr{17_1\,0_0\,13_0}$,
$\dtr{13_0\,3_1\,17_1}$, $\dtr{17_1\,3_1\,15_1}$, $\dtr{15_1\,3_1\,13_0}$,
$\dtr{1_0\,7_1\,8_1}$, $\dtr{8_1\,7_1\,18_1}$, $\dtr{18_1\,7_1\,11_0}$,
$\dtr{11_0\,7_1\,4_1}$, $\dtr{4_1\,7_1\,6_0}$, $\dtr{6_0\,7_1\,16_0}$,
$\dtr{16_0\,7_1\,14_1}$, $\dtr{14_1\,7_1\,1_0}$.
\end{example}
}

\end{document}